\newtheorem{theorem}{Theorem}
\newtheorem{corollary}{Corollary}
\newtheorem{lemma}{Lemma}
\newtheorem{proposition}{Proposition}
\newtheorem{definition}{Definition}
\newtheorem{remark}{Remark}
\newtheorem{example}{Example}
\numberwithin{equation}{section}
\def\bbbr{{\rm I\!R}} %reelle Zahlen
\def\bbbn{{\rm I\!N}} %natuerliche Zahlen
\def\la{\lambda}
\def\be{\beta}
\def\al{\alpha}
\def\openE{{{\rm I}\kern-.16em {\rm E}}}
\def\NN{\bbbn}
\def\RR{\bbbr}
\def\eop{{\hfill\vbox{\hrule height .3pt
      \hbox{\vrule width.3pt height 7pt
      \kern 7pt
      \vrule width .3pt}
      \hrule height .3pt}} \par\bigskip}
\title{\large{\textsc{Learning Rates of Regression with $q$-norm Loss and Threshold $^\dag$\footnotetext{ }}}}
\author{
Ting Hu \\
{\small School of Mathematics and Statistics, Wuhan University } \\
{\small Luojia Hill, Wuhan 430072, China, tinghu@whu.edu.cn }\\
Yuan Yao\\
{\small School of
Mathematical Sciences, Peking University}\\
{\small Beijing 100871, China,
yuany@math.pku.edu.cn}}
\date{}
\begin{document}
\maketitle

\begin{abstract}
This paper studies some robust regression problems associated with the $q$-norm loss ($q\geq 1$) and the $\epsilon$-insensitive
$q$-norm loss in the reproducing kernel Hilbert space. We establish a variance-expectation bound under a priori noise
condition on the conditional distribution, which is the key technique to measure the error bound. Explicit learning rates will be given under
the approximation ability assumptions on the reproducing kernel Hilbert space.
\end{abstract}

\bigskip

\noindent {\bf Key Words and Phrases.} Insensitive $q$-norm loss, quantile regression, reproducing kernel Hilbert space, sparsity.
\medskip

\noindent {\bf Mathematical Subject Classification.} 68Q32, 41A25

\baselineskip 18pt

\section{Introduction}
In this paper we consider regression with the $q$-norm loss $\psi_q$ with $q\geq1$ and an $\epsilon$-insensitive
$q$-norm loss $\psi_q^\epsilon$ (to be defined) with
a threshold $\epsilon>0$.
Here $\psi_q$ is the univariate function defined by $\psi_q(u)=|u|^q$. For a learning algorithm generated by a
regularization scheme in reproducing kernel Hilbert spaces,
learning rates and approximation error will be presented when $\epsilon$ is chosen
appropriately for balancing learning rates and sparsity.

For $q=1$, the regression problem is the classical statistical method of least absolute deviations which is more
robust than the least squares method
and is resistant to outliers in data \cite{Huber}. Its associated loss
 $\psi(u)=|u|,u\in \RR,$ is widely used in practical applications for robustness. In fact, for all $q<2,$
 the loss $\psi_q$ is less sensitive to outliers and is thus more robust than the square loss.
 Vapnik \cite{Vapnik} proposed an $\epsilon$-insensitive
 loss $\psi^\epsilon(u): \RR \to \RR_+$ to get sparsity in support vector regressions, which is defined by
 \begin{equation}\label{insen}
\psi^\epsilon(u)=
\left\{\begin{array}{ll} |u|-\epsilon ,  &\hbox{ if} \ |u|> \epsilon, \\
0, &\hbox{ if}\ |u|\leq \epsilon.
\end{array}\right.
\end{equation}
When fixing $\epsilon>0,$ error analysis was conducted in \cite{TCP}.
Xiang, Hu and Zhou \cite{XHZ,XHZ2} showed how to accelerate learning rates and preserve sparsity by adapting $\epsilon$.
In \cite{HXZ}, they discussed the convergence ability with flexible $\epsilon$ in an online algorithm.
For the quantile regression with $\epsilon=0$ and a pinball loss having different slopes in
different sides of the origin in $\RR$ \cite{KoenBass},
Steinwart and Christamann \cite{SteinChris,Steinwart} established comparison theorems and derived learning rates
under some noise conditions.

In this paper, we apply the $q$-norm loss $\psi_q$ with $q>1$ to improve the convexity of the insensitive loss $\psi$.
Our results show how the insensitive parameter $\epsilon$ that produces the sparsity can be chosen adaptively as the function of the sample size $\epsilon=\epsilon(T)\rightarrow0$ when $T\rightarrow\infty$, to affect the error rates of the learning algorithm (to be defined by (1.4)). Such results include some early studies as special cases.

In the sequel, assume that the input space $X$ is a compact metric space and the output space $Y=\RR$.
Let $\rho$ be a Borel probability measure on $Z:=X\times Y$,
$\rho_x(\cdot)$ be the conditional distribution of $\rho$ at each $x\in X$ and $\rho_X$
be the marginal distribution on $X$.
For a measurable function $f:X\rightarrow Y,$
the {\it generalization error} ${\cal E}(f)$ associated with the $q$-norm loss $\psi_q$,
is defined by
\begin{equation}\label{gener}
{\cal E}(f)=\int_Z\psi_q(y-f(x))d\rho.
\end{equation}
Denote $f_q:X\rightarrow Y$ as the minimizer of the generalization error ${\cal E}(f)$ over all
measurable functions. Its properties and the corresponding learning problem in the empirical risk minimization
framework were discussed in \cite{Yao}. When $q=1,$ the target function $f_q$ is a function containing
the medians of the conditional distribution for all $x\in X$.
For symmetric distributions,
the median is also the regression function, which is the conditional mean for given $X.$
 We aim at learning the minimizer $f_q$ from a sample ${\bf
z}=\{(x_i,y_i)\}_{i=1}^T \in Z^T,$ which
is assumed to be independently drawn
according to $\rho$.
Inspired by the $\epsilon$-insensitive
 loss \cite{Vapnik}, we introduce an $\epsilon$-insensitive $q$-norm loss $\psi^\epsilon_q$ which is defined by
 \begin{equation}\label{qnorm}
\psi^\epsilon_q(u)=
\left\{\begin{array}{ll}( |u|-\epsilon )^q,  &\hbox{ if} \ |u|> \epsilon, \\
0, &\hbox{ if}\ |u|\leq \epsilon.
\end{array}\right.
\end{equation}
Our learning task will be carried out by a regularization scheme in reproducing
kernel Hilbert spaces.
With a continuous, symmetric and positive semidefinite
function $K: X\times X \to \RR$ (called a Mercer kernel), the {\it
reproducing kernel Hilbert space} (RKHS) ${\cal H}_K$ is defined
as the completion of the span of $\{K_x=K(x,\cdot): x\in X\}$ with
the inner product $\langle\cdot, \cdot\rangle_K$ satisfying
$\langle K_x, K_u\rangle_K=K(x,u).$
 The regularization algorithm in the paper takes the form
 \begin{equation}\label{svm}
f_{{\bf z}}^\epsilon=\arg\min_{f\in {\cal
H}_K}\big\{\frac{1}{T}\sum_{t=1}^T\psi^\epsilon_q(f(x_t)-y_t)+\la\|f\|^2_K\big\}.
\end{equation}
Here $\la>0$ is a regularization parameter. Our learning rates are stated in terms of approximation or regularization error, noise conditions,
and the capacity of the RKHS. Our main goal is to study how the learned function $f_{{\bf z}}^\epsilon$ in (\ref{svm}) converges to the
target function $f_q.$
There is a large literature \cite{CWYZ,WYZ,SZone} in learning theory for studying {\it the approximation error} or {\it regularization error} ${\cal D}(\la)$
of the triple $(K,\rho,q)$ defined by
\begin{equation*}\label{approx}
{\cal D}(\la)=\min_{f\in {\cal H}_K}\big\{{\cal E}(f)-{\cal E}(f_q) +\la\|f\|_K^2\big\},\quad\la>0.
\end{equation*}
 {\it The regularization function} is defined as
 \begin{align}\label{regularfun}
f_\la=\arg\min_{f\in {\cal H}_K}\big\{{\cal E}(f)-{\cal E}(f_q) +\la\|f\|_K^2\big\}.
\end{align}
In the sequel, let $L_{\rho_X}^p$ with $p>0$  be the space of p integrable functions with respect to $\rho_X$
and $\|\cdot\|_{L_{\rho_X}^p}$ be  the norm in $L_{\rho_X}^p$.
A usual assumption on
the regularization error ${\cal D}(\la)$ which imposes certain smoothness on ${\cal H}_K$ is
\begin{equation}\label{app}
{\cal D}(\la)\leq {\cal D}_0\la^\be,\quad \forall \la>0
\end{equation}
with some $0<\be\leq1$ and ${\cal D}_0>0$.
\begin{remark}
Assumption (\ref{app}) always holds with $\beta=0$. When the target function $f_q\in{\cal H}_K$ and ${\cal H}_K$ is dense in $C(X)$
 which consists of bounded continuous functions on $X$, the approximation error ${\cal D}(\la)\rightarrow0$ as $\la\rightarrow0.$
 Thus, the decay (\ref{app}) is natural and can be illustrated in terms of interpolation spaces \cite{SZone}.
Define the integral operator $L_K: L^2_{\rho_X}\rightarrow L^2_{\rho_X}$ by $L_K(f)(x)=\int_XK(x,y)f(y)d\rho_X, x\in X,f\in L^2_{\rho_X}$ and suppose that
the minimizer
$f_q$ is in the range of $L_K^\nu$ with $0<\nu\leq\frac{1}{2}$.
When $q=1,$ the approximation error ${\cal D}(\la)$ can be $O(\la^{\frac{\nu}{1-\nu}})$ for quantile regression \cite{XHZ2}.
 When $q=2$,
${\cal D}(\la)=O(\la^{2\nu})$ for the least square. For other $q>1$, the associated loss $\psi_q$ is Lipschitz in a bounded domain and
the corresponding ${\cal D}(\la)$ can be characterized by the ${\cal K}$-functional \cite {CWYZ}, which can have the same polynomial decay as (\ref{app}).
\end{remark}

We assume that the conditional distribution $\rho_x(\cdot)$ is supported on $[-M,M],\ M>0$ at each $x$
and is non-degenerate, i.e. any non-empty open set of $Y$ has strictly positive measure, which ensures that the target function $f_q$ is unique.
Without loss of generality, let the support of $\rho_x(\cdot)$ be $[-\frac{1}{2},-\frac{1}{2}]$ at each $x\in X$ and our
analysis below is applicable for any $M>0.$ We will prove that in the next section.
It is natural to project values of the learned function $f_{{\bf z}}^\epsilon$ onto some interval by the
projection operator \cite{CWYZ,WYZ1}.
\begin{definition}
The projection operator $\pi$ on the space of measurable functions $f:X\rightarrow \RR$ onto the interval $[-1,1]$ is defined by
\begin{equation*}\label{pro}
\pi(f(x))=
\left\{\begin{array}{ll} 1 , & \hbox{if}\quad f(x) \geq 1, \\
f(x), & \hbox{if}\quad -1<f(x)<1,\\
-1 ,  &\hbox{if}\quad f(x) \leq -1.
\end{array}\right.
\end{equation*}
\end{definition}
To demonstrate our main result in the general case, we shall give the following learning rate in the special case when
$K$ is $C^\infty$.
\begin{theorem}\label{genrate}
Let $X\subset\RR^n$ and $K\in C^\infty(X\times X)$. Assume that $f_q\in {\cal H}_K$ with $q>1$, $\|f_q\|_\infty\leq \frac{1}{4}$ and
the
conditional distributions $\{\rho_x(\cdot)\}_{x\in X}$ have density
functions given by
\begin{equation}\label{densityexpre}
\frac{d \rho_x}{d y}(y) = \left\{\begin{array}{ll}
A|y- f_q (x)|^\varphi, & \hbox{if} \ |y- f_q
(x)| \leq \frac{1}{4}, \\ 0, & \hbox{otherwise,}
\end{array}\right.
\end{equation}
where $A=2^{2\varphi+1}(\varphi+1),\ \varphi>0$.
Take $\la=\epsilon=T^{-\frac{q+\varphi+1}{2(q+\varphi)}}$, then for any $0<\delta<1,$ with confidence
$1-\delta,$ we have
\begin{align*}
\|\pi(f^\epsilon_{\bf z})-f_q\|_{L^{q+\varphi+1}_{\rho_X}}\leq C'\sqrt{\log\frac{3}{\delta}} T^{-\frac{1}{2(q+\varphi)} },
\end{align*}
where $C'$ is a constant independent of $T$ or $\delta.$

\end{theorem}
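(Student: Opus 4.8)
The plan is to run the standard error analysis for regularized schemes, built on two non-routine ingredients: a comparison (self-calibration) inequality relating the excess generalization error to the target norm $L^{q+\varphi+1}_{\rho_X}$, and a variance-expectation bound for the excess loss that drives a one-sided Bernstein/ratio concentration estimate. Writing $\xi_f(z)=\psi_q(\pi(f)(x)-y)-\psi_q(f_q(x)-y)$ so that $\openE\xi_f={\cal E}(\pi(f))-{\cal E}(f_q)$, and noting that $y$ together with all relevant function values lie in a fixed bounded interval (by the normalized support and $\|f_q\|_\infty\le\frac14$), the loss increments are uniformly bounded and Lipschitz, which is exactly what the concentration machinery needs.

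First I would establish the comparison inequality. For fixed $x$, put $a=\pi(f)(x)-f_q(x)$ and study the conditional excess risk
\begin{equation*}
Q(a)=A\int_{-1/4}^{1/4}\big(|t-a|^q-|t|^q\big)\,|t|^\varphi\,dt,
\end{equation*}
which is even, convex, and minimized at $a=0$ with $Q(0)=0$ (this is why the symmetric density forces $f_q$ to be the minimizer, and $A$ is chosen so the density integrates to one). The rescaling $t=as$ together with cancellation of the odd leading term gives $Q(a)\sim c\,a^2$ as $a\to0$; since $q+\varphi+1>2$, the ratio $Q(a)/|a|^{q+\varphi+1}$ blows up at $0$ and stays continuous and positive on $0<|a|\le\frac54$, whence $Q(a)\ge c_0|a|^{q+\varphi+1}$ uniformly there. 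Integrating over $x$ yields
\begin{equation*}
\|\pi(f)-f_q\|_{L^{q+\varphi+1}_{\rho_X}}^{q+\varphi+1}\le c_0^{-1}\big({\cal E}(\pi(f))-{\cal E}(f_q)\big).
\end{equation*}
For the variance bound, the uniform Lipschitz property gives $\openE[\xi_f^2\mid x]\le C\,a^2\le C\,Q(a)^{2/(q+\varphi+1)}$ pointwise, and integration followed by Jensen's inequality (the exponent $\theta:=\tfrac{2}{q+\varphi+1}<1$ makes $u\mapsto u^\theta$ concave) produces $\openE[\xi_f^2]\le C\,(\openE\xi_f)^\theta$.

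Next I would carry out the error decomposition and control the sample error. Using $\psi_q^\epsilon\le\psi_q$, the projection inequality $\psi_q^\epsilon(\pi(f)(x)-y)\le\psi_q^\epsilon(f(x)-y)$, and the pointwise drift $0\le\psi_q(u)-\psi_q^\epsilon(u)\le C\epsilon$ on the bounded range, split ${\cal E}(\pi(f^\epsilon_{\bf z}))-{\cal E}(f_q)$ into sample-error terms (comparing $\openE$ with empirical averages at $\pi(f^\epsilon_{\bf z})$ and at $f_\la$), the regularization error ${\cal D}(\la)$, and an $O(\epsilon)$ insensitivity drift. Since $f^\epsilon_{\bf z}$ is sample dependent, I would first record the a priori bound $\la\|f^\epsilon_{\bf z}\|_K^2\le{\cal E}^\epsilon_{\bf z}(0)\le C$, i.e. $\|f^\epsilon_{\bf z}\|_K\le C\la^{-1/2}$, then apply a ratio-type concentration inequality uniformly over the ball ${\cal B}_R=\{\|f\|_K\le R\}$ with $R=C\la^{-1/2}$. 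Here $K\in C^\infty(X\times X)$ is decisive: the covering numbers satisfy $\log{\cal N}({\cal B}_R,\eta)\le C(\log(R/\eta))^{n+1}$, which is only polylogarithmic in $T$, so the capacity contributes merely log factors and does not degrade the polynomial rate.

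Combining the pieces through the variance bound gives, with confidence $1-\delta$,
\begin{equation*}
{\cal E}(\pi(f^\epsilon_{\bf z}))-{\cal E}(f_q)\le C\Big({\cal D}(\la)+\epsilon+\big(\tfrac{\log(3/\delta)}{T}\big)^{\frac{1}{2-\theta}}\Big),
\end{equation*}
the three probabilistic events (two sample-error estimates and the a priori bound) being merged by a union bound, which is the source of the factor $3$. Since $f_q\in{\cal H}_K$ we have ${\cal D}(\la)\le\la\|f_q\|_K^2=O(\la)$, and $\tfrac{1}{2-\theta}=\tfrac{q+\varphi+1}{2(q+\varphi)}$, so the balanced choice $\la=\epsilon=T^{-\frac{q+\varphi+1}{2(q+\varphi)}}$ renders all three terms of the common order. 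Inserting this into the comparison inequality and taking the $(q+\varphi+1)$-th root gives the exponent $-\tfrac{1}{2(q+\varphi)}$ in $T$ and a confidence factor $(\log\frac3\delta)^{\frac{1}{2(q+\varphi)}}\le\sqrt{\log\frac3\delta}$ (valid since $q+\varphi>1$), which is exactly the asserted bound. I expect the main obstacle to be the sharp evaluation of $Q(a)$ and the resulting variance exponent $\theta=\tfrac{2}{q+\varphi+1}$: the singularity of $|t|^\varphi$ at the origin, the non-smoothness of $|t|^q$ for $1<q<2$, and the requirement of a lower bound valid on the entire bounded range rather than only the small-$a$ asymptotics make this noise-condition estimate the delicate core of the argument, since it alone determines the final rate.
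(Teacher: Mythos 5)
Your plan is sound in outline, but it is a genuinely different route from the paper's, and it contains one unsupported step. The paper does not prove Theorem 1 directly: it deduces it from the general Theorem 2 by verifying the abstract noise condition of Definition 2 --- a direct computation $\rho_x(\{y: f_q(x)\le y\le f_q(x)+s\})=2^{2\varphi+1}s^{\varphi+1}$ shows $\rho$ has $\infty$-average type $w=\varphi+1$ with $b(x)=2^{2\varphi+1}$, $a(x)=\tfrac14$ --- and then invokes the comparison theorem (Theorem 3) and the variance--expectation bound (Lemma 1) with $\theta=\tfrac{2}{q+\varphi+1}$, $r=q+\varphi+1$, $\beta=1$, $k=0$. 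You instead re-derive both key inequalities from the specific density: your compactness-plus-asymptotics argument for $Q(a)\ge c_0|a|^{q+\varphi+1}$ on $0<|a|\le\tfrac54$ is correct (the quadratic behavior at $0$ needs $Q''(0)=q(q-1)A\int_{-1/4}^{1/4}|t|^{q+\varphi-2}\,dt<\infty$, i.e.\ exactly $q+\varphi>1$, which holds), and it is arguably more transparent here, though it exploits the fact that $Q$ is independent of $x$ and would not extend to $x$-dependent noise the way the paper's pointwise estimates of $C'_{q,x}(s)$ do. Your Jensen step reproduces Lemma 1, and your decomposition with the drift $0\le\psi_q(u)-\psi_q^\epsilon(u)\le q|u|^{q-1}\epsilon$, the bound ${\cal D}(\la)\le\la\|f_q\|_K^2$ from $f_q\in{\cal H}_K$, and the balancing via $\tfrac{1}{2-\theta}=\tfrac{q+\varphi+1}{2(q+\varphi)}$ coincide with the paper's Lemma 2 and relation (\ref{relation}).

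The genuine flaw is the covering-number claim. The bound $\log{\cal N}(B_R,\eta)\le C\big(\log(R/\eta)\big)^{n+1}$ holds for analytic kernels (e.g.\ Gaussians), not for general $K\in C^\infty(X\times X)$; for merely $C^\infty$ kernels the available estimate --- the one the paper itself cites in Remark 2 --- is $\log{\cal N}(B_1,\eta)\le C_k\eta^{-k}$ for every fixed $k>0$. With this corrected bound, your shortcut of working on the crude ball of radius $R=C\la^{-1/2}=CT^{\al/2}$ (with $\al=\tfrac{1}{2-\theta}$) yields a capacity term of order $T^{-(1-k\al/2)/(2+k-\theta)}$, which for every fixed $k>0$ falls short of $T^{-1/(2-\theta)}$ by a small but nonzero polynomial factor, so you prove the rate $T^{-\frac{1}{2(q+\varphi)}+\varepsilon'}$ for every $\varepsilon'>0$ rather than the exact exponent claimed. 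This slack is exactly what the paper's iteration technique (Lemma 3) is designed to remove: it shrinks the radius from $\la^{-1/2}$ to $T^{\vartheta}$ with $\vartheta=\xi$ arbitrarily small in this setting (since $\beta=1$ and $\al=\eta=\tfrac{1}{2-\theta}$), before the final pass to $k=0$ that the paper makes in its proof of Theorem 1. So either add the iteration step, or restrict the hypothesis to analytic kernels where your polylogarithmic bound is legitimate, or state the rate with an arbitrarily small loss in the exponent; as written, the polylog claim is the one link in your chain that the stated hypotheses do not support.
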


To state our main result in the general case, we need a noise condition
on the measure $\rho$ introduced in \cite{Steinwart,SteinChris}.

\begin{definition}\label{noisecondition}
Let $0<p \leq \infty$ and $w>0$. We say that $\rho$ has a
$p$-average type $w$ if there exist two functions
$b$ and $a$ from $X$ to $\RR$ such that $\{ba^{w
}\}^{-1} \in L^p_{\rho_X}$ and for any $x\in X$ and $s\in
(0, a(x)]$, there holds
\begin{equation*}
 \rho_x(\{y: f_{q}(x)\leq y \leq f_{q}(x)+s\})\geq b (x) s^{w }
 \end{equation*}
 and
\begin{equation}\label{type}
 \rho_x(\{y:f_{q}(x)-s \leq y \leq f_{q}(x)\})\geq b(x)
 s^{w}.
 \end{equation}
\end{definition}
This assumption can be satisfied by many common conditional distributions
such as Guassian, students' t distributions and uniform distributions. In the following,
we will give an example to illustrate Definition 2 in detail. More examples can be found in \cite{Steinwart,SteinChris}.
\begin{example}
We assume that the conditional distributions $\{\rho_x(\cdot)\}_{x\in X}$ are Guassian distributions with a uniform variance $
\sigma>0$, i.e. $\frac{d\rho_x}{dy}(y)=\frac{1}{\sqrt{2\pi}\sigma}\exp\{-\frac{(y-u_x)^2}{2\sigma^2}\}$, where $\{u_x\}_{x
\in X}$
are expectations of the Gaussian distributions $\{\rho_x(\cdot)\}_{x\in X}$. It is not difficult to
check that the minimizer $f_\rho(x)$ can  take the value of $u_x$ at each $x\in X,$ then  for any $s\in (0,\sigma]$, there holds
\begin{align*}
 &\rho_x(\{y: f_{q}(x)\leq y \leq f_{q}(x)+s\}=\frac{1}{\sqrt{2\pi}\sigma}\int_{f_\rho(x)}^{f_\rho(x)+s}
 \exp\{-\frac{(y-u_x)^2}{2\sigma^2}\}dy\\
 &=\frac{1}{\sqrt{2\pi}\sigma}\int_{0}^{s}
 \exp\{-\frac{y^2}{2\sigma^2}\}dy\geq\frac{1}{\sqrt{2\pi}\sigma}\int_{0}^{s}
 \exp\{-\frac{s^2}{2\sigma^2}\}dy\geq\frac{e^{-\frac{1}{2}}}{\sqrt{2\pi}\sigma}s.
 \end{align*}
By similarity, we also have that $\rho_x(\{y: f_{q}(x)-s\leq y \leq f_{q}(x)\}\geq\frac{e^{-\frac{1}{2}}}{\sqrt{2\pi}\sigma}s$.
Thus, the measure $\rho$ has a
$\infty$-average type $1$.
\end{example}
Our error analysis is related to the capacity of the hypothesis space ${\cal H}_K$ which is measured by covering numbers.

\begin{definition}
For a subset $S$ of $C(X)$ and $\varepsilon>0$, the covering number ${\cal N}(S,\varepsilon)$ is the minimal integer $l\in \NN$ such that
there exist $l$ disks with radius $\varepsilon$ covering $S$.
\end{definition}
The covering numbers of balls $B_{R}=\{f\in{\cal H}_K:\|f\|_K\leq R\}$ with $R>0$ of the RKHS have been well understood
in the learning theory \cite{Zhou1,Zhou2}.
In this paper, we assume for some $k>0$ and $C_k>0$ that
\begin{equation}\label{cover}
\log{\cal N}(B_1,\varepsilon)\leq C_k\big(\frac{1}{\varepsilon}\big)^k,\ \forall \varepsilon>0.
\end{equation}
\begin{remark}
When $X$ is a bounded subset of $\RR^n$ and the RKHS ${\cal H}_K$ is a Sobolev space $H^m(X)$ with index $m$, it is shown
\cite{Zhou1} that the condition (\ref{cover}) holds true with $k=\frac{2n}{m}$.
If the kernel $K$ lies in the smooth space $C^\infty(X\times X),$ then (\ref{cover}) is satisfied for an arbitrarily small $k>0.$
Another common way to measure the capacity of ${\cal H}_K$ is the empirical covering number \cite{Zhang}, which is out of scope of our discussion in this paper.
\end{remark}

Denote
\begin{equation}\label{thetar}
\theta=\min\{\frac{2}{q+w},\frac{p}{p+1}\}\in (0,1],\qquad r=\frac{p(q+w)}{p+1}>0.
\end{equation}
The following learning rates in the general case will be proved in Section 4.
One need to point out that the proof of Theorem 2 is only applicable to
the case $q>1$. However, when $q=1$, it is a special case of quantile regression and the same learning rates as those of Theorem 2 can be found in \cite{XHZ,XHZ2}.
\begin{theorem}
Suppose that $\rho$  has a
$p$-average type $w$ for some $0<p\leq\infty$ and $\omega>0$. Assume that the regularization error condition (\ref{app}) is satisfied
for some $0<\be\leq1$ and (\ref{cover}) holds with $k>0$. Take $\la=T^{-\al}, \epsilon=T^{-\eta}$ with $0<\al\leq 1$, $0<\eta\leq \infty.$ Let $\xi>0.$
Then for any $0<\delta<1,$ with confidence $1-\delta,$ there holds
\begin{align}\label{generalrate}
\|\pi(f^\epsilon_{\bf z})-f_q\|_{L^{r}_{\rho_X}}\leq C^* \big(\log\frac{3}{\xi}\big)^2\sqrt{\log\frac{3}{\delta}}T^{-\Lambda}
\end{align}
where $C^*$ is a constant independent of $T$ or $\delta$ ,
\begin{equation*}\label{parameter}
\Lambda =\frac{1}{q+w}\min\big\{\eta,\al\be,1-\frac{q(1-\be)\al}{2},\frac{1}{2-\theta},\frac{1}{2+k-\theta}-\frac{k}{1+k}\vartheta \big\}
\end{equation*}
with
$\vartheta=\max\left\{\frac{\al-\eta}{2},\frac{\al(1-\be)}{2},\frac{\al}{2}+\frac{q(1-\be)\al}{4}-\frac{1}{2},\frac{\al}{2}-\frac{1}{2(2-\theta)}, \frac{[\al(2+k-\theta)-1](1+k)}{(2+k-\theta)(2+k)}+\xi\right\}\geq0$
provided that
\begin{equation}\label{retrict}
\vartheta<\frac{1+k}{k(2+k-\theta)}.
\end{equation}
\end{theorem}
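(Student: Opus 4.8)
The plan is to control the excess generalization error ${\cal E}(\pi(f^\epsilon_{\bf z}))-{\cal E}(f_q)$ and then convert it into the $L^r_{\rho_X}$ estimate by a comparison theorem, so that the final rate $T^{-\Lambda}$ is exactly $(T^{-(q+w)\Lambda})^{1/(q+w)}$. First I would record the normalization: after rescaling each $\rho_x$ to be supported on $[-\frac12,\frac12]$ one has $\|f_q\|_\infty\le\frac12$, so the projection $\pi$ onto $[-1,1]$ never increases the loss, giving ${\cal E}(\pi(f^\epsilon_{\bf z}))\le{\cal E}(f^\epsilon_{\bf z})$, and every argument entering $\psi_q$ and $\psi^\epsilon_q$ stays in a fixed bounded interval where these losses are Lipschitz with a controlled constant. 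Two preliminary estimates carry the analytic content. (i) A comparison theorem: using the $p$-average type $w$ condition of Definition~\ref{noisecondition} to lower bound the conditional excess risk pointwise by $c\,|\pi(f)(x)-f_q(x)|^{q+w}$ on $(0,a(x)]$ and integrating against $\{ba^w\}^{-1}\in L^p_{\rho_X}$ yields
$$\|\pi(f)-f_q\|_{L^r_{\rho_X}}^{q+w}\le C\big({\cal E}(\pi(f))-{\cal E}(f_q)\big),\qquad r=\tfrac{p(q+w)}{p+1}.$$
(ii) The variance--expectation bound promised in the abstract: for $\xi(z)=\psi_q(y-\pi(f)(x))-\psi_q(y-f_q(x))$ the same noise condition gives $\openE[\xi^2]\le C\,(\openE[\xi])^\theta$ with $\theta=\min\{\tfrac{2}{q+w},\tfrac{p}{p+1}\}$; this controls the variance of the empirical process and is what keeps the exponents $\tfrac{1}{2-\theta}$ and $\tfrac{1}{2+k-\theta}$ as small as claimed.

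Next I would set up the error decomposition. Writing ${\cal E}^\epsilon$ for the risk of the insensitive loss and ${\cal E}^\epsilon_{\bf z}$ for its empirical counterpart, I decompose
$${\cal E}(\pi(f^\epsilon_{\bf z}))-{\cal E}(f_q)\le \underbrace{[{\cal E}-{\cal E}^\epsilon](\pi(f^\epsilon_{\bf z}),f_q)}_{\text{drift } A}+\underbrace{{\cal S}_1+{\cal S}_2}_{\text{sample error}}+\underbrace{{\cal D}(\la)}_{\text{approximation}},$$
where ${\cal S}_1$ compares population and empirical insensitive excess risks at the random $f^\epsilon_{\bf z}$ and ${\cal S}_2$ does so at the fixed regularization function $f_\la$. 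The drift $A$ is estimated from $0\le\psi_q(u)-\psi^\epsilon_q(u)\le C\epsilon$ on the relevant bounded range, giving the $\eta$-term; the approximation part is exactly ${\cal D}(\la)\le{\cal D}_0\la^\be=T^{-\al\be}$, the $\al\be$-term. For ${\cal S}_2$ I would apply a one-sided Bernstein inequality to the single function $f_\la$, using $\|f_\la\|_K^2\le{\cal D}(\la)/\la\le{\cal D}_0\la^{\be-1}$ so that the Lipschitz/range factor of the loss increment scales like $T^{q(1-\be)\al/2}$; the $B/T$ Bernstein term then produces the $1-\tfrac{q(1-\be)\al}{2}$-exponent.

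Everything then hinges on the term ${\cal S}_1$, which I would bound by combining the variance bound (ii) with the capacity assumption (\ref{cover}) through a ratio/concentration inequality over a ball $B_R=\{\|f\|_K\le R\}$; this yields an estimate involving both $\theta$, $k$, and the radius $R$, and is the source of the $\tfrac{1}{2+k-\theta}-\tfrac{k}{1+k}\vartheta$-term once $R\asymp T^\vartheta$. To pin down $R$ I use a sharp a priori bound on $\|f^\epsilon_{\bf z}\|_K$. The crude bound comes from testing (\ref{svm}) at $f=0$: $\la\|f^\epsilon_{\bf z}\|_K^2\le{\cal E}^\epsilon_{\bf z}(0)\le C$, so $\|f^\epsilon_{\bf z}\|_K\lesssim\la^{-1/2}=T^{\al/2}$. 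Plugging this crude $R$ into the ${\cal S}_1$ estimate and into the optimality inequality $\la\|f^\epsilon_{\bf z}\|_K^2\le{\cal E}^\epsilon_{\bf z}(f_\la)+\la\|f_\la\|_K^2-{\cal E}^\epsilon_{\bf z}(\pi(f^\epsilon_{\bf z}))$ produces a smaller exponent for $\|f^\epsilon_{\bf z}\|_K$, and I would iterate this bootstrapping finitely many times. The fixed point is the value $\vartheta$ in the statement, each entry of the $\max$ being the exponent contributed by one competing term, and condition (\ref{retrict}) is exactly what guarantees the iteration contracts and terminates with this $\vartheta$.

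Finally, inserting $R=T^\vartheta$, taking $\la=T^{-\al}$ and $\epsilon=T^{-\eta}$, collecting the five exponents into the minimum and invoking the comparison theorem (i) gives the stated rate $T^{-\Lambda}$ with $\Lambda=\frac{1}{q+w}\min\{\cdots\}$; the confidence $1-\delta$ and the factors $\sqrt{\log(3/\delta)}$ and $(\log(3/\xi))^2$ arise from taking the three probabilistic steps (${\cal S}_1$, ${\cal S}_2$, and the iteration) each with confidence $1-\delta/3$. The main obstacle I anticipate is the ${\cal S}_1$ step coupled with the iteration: one must run the capacity-controlled concentration over $B_R$ with the variance bound of exponent $\theta$ and covering exponent $k$ simultaneously, and then solve the resulting self-referential inequality for $\|f^\epsilon_{\bf z}\|_K$. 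Obtaining the sharp $\vartheta$ rather than a lossy one, and verifying that precisely the five candidate terms in the $\max$ appear under the single admissibility condition (\ref{retrict}), is the delicate bookkeeping at the heart of the proof, with the variance--expectation bound (ii) serving as the tool that prevents any loss of exponent.
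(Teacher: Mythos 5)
Your proposal follows essentially the same route as the paper: the comparison theorem in $L^r_{\rho_X}$ and the variance--expectation bound with $\theta=\min\{\frac{2}{q+w},\frac{p}{p+1}\}$ (Theorem 3 and Lemma 1), an error decomposition into an $O(\epsilon)$ drift, sample errors $S_1,S_2$, and ${\cal D}(\la)\leq{\cal D}_0\la^\beta$ (Lemma 2), a one-sided Bernstein bound at the fixed $f_\la$ with $\|f_\la\|_K^2\leq{\cal D}(\la)/\la$ producing the $1-\frac{q(1-\beta)\al}{2}$ exponent (Corollary 2), a ratio probability inequality over $B_R$ combining the covering-number condition (\ref{cover}) with the variance bound (Corollary 3), and the iteration improving the crude bound $\|f^\epsilon_{\bf z}\|_K\leq\la^{-1/2}$ to $O(T^\vartheta)$ with (\ref{retrict}) guaranteeing $\Lambda>0$ (Lemma 3). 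One bookkeeping caveat: the paper deliberately defines $S_1,S_2$ with the \emph{plain} risks ${\cal E},{\cal E}_{\bf z}$ relative to $f_q$ (so that $\mathbb{E}g\geq0$ holds and Lemma 1 applies directly), letting the insensitive loss enter only through the regularized middle term via (\ref{relation}) as the deterministic $q2^{q-1}\epsilon$, whereas your $S_1,S_2$ are phrased in terms of insensitive excess risks relative to $f_q$, for which the expectation can be negative (since $f_q^\epsilon$, not $f_q$, minimizes ${\cal E}^\epsilon$) and the plain-loss variance bound does not directly apply; also, the $\big(\log\frac{3}{\xi}\big)^2$ factor arises from the iteration depth $J\sim\log(1/\xi)$ and the union bound over the $J$ exceptional sets, not from splitting the confidence among three steps.
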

\begin{corollary}
Let $X\subset\RR^n$, $K\in C^\infty(X\times X)$. Assume (\ref{app}) and (\ref{type}). Take $\la=T^{-1}$, $\epsilon=T^{-\eta}$ with $0<\eta\leq\infty$. If $1<q\leq2$, then
the index $\Lambda$ for the learning rate (\ref{generalrate}) is $\frac{1}{q+w}\min\{\eta,\be,\frac{1}{2-\theta}\}$.
\end{corollary}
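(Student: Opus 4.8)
The plan is to obtain the corollary as a direct specialization of Theorem 2, reading off $\Lambda$ once the hypotheses are substituted into its definition. Three structural inputs drive the computation: first, since $K\in C^\infty(X\times X)$, Remark 2 tells us that the capacity condition (\ref{cover}) holds with an \emph{arbitrarily small} $k>0$, so I am free to let $k\to 0^+$; second, the choice $\la=T^{-1}$ forces $\al=1$ in the parametrization $\la=T^{-\al}$; third, the restriction $1<q\le 2$ will be used only at the very end, to discard one entry of the minimum defining $\Lambda$.

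First I would set $\al=1$ in the expression for $\vartheta$ and simplify its five entries, obtaining $\tfrac{1-\eta}{2}$, $\tfrac{1-\be}{2}$, $\tfrac{q(1-\be)}{4}$, $\tfrac{1-\theta}{2(2-\theta)}$, and $\tfrac{(1+k-\theta)(1+k)}{(2+k-\theta)(2+k)}+\xi$. The key observation is that each of these stays bounded as $k\to 0^+$ (the last tends to $\tfrac{1-\theta}{2(2-\theta)}+\xi$), so $\vartheta$ is bounded; since the right-hand side $\tfrac{1+k}{k(2+k-\theta)}$ of the proviso (\ref{retrict}) diverges as $k\to 0^+$, the constraint (\ref{retrict}) is automatically met for all sufficiently small $k$. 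Hence the rate (\ref{generalrate}) is legitimately available for each such $k$, and the sharpest index is the supremum over admissible $k$, i.e. the limit $k\to 0^+$.

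Next I would pass to that limit inside the minimum defining $\Lambda$. With $\al=1$ the five arguments become $\eta$, $\be$, $1-\tfrac{q(1-\be)}{2}$, $\tfrac{1}{2-\theta}$, and $\tfrac{1}{2+k-\theta}-\tfrac{k}{1+k}\vartheta$; because $\vartheta$ is bounded the last argument tends to $\tfrac{1}{2-\theta}$, coinciding with the fourth and so dropping out. It then remains to remove the third term, and this is exactly where $1<q\le 2$ enters: since $\tfrac{q}{2}\le 1$ and $\be\le 1$ we have $(1-\be)\bigl(1-\tfrac{q}{2}\bigr)\ge 0$, that is $1-\tfrac{q(1-\be)}{2}\ge \be$, so this entry is never smaller than the already-present $\be$ and may be deleted. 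What survives is precisely $\Lambda=\tfrac{1}{q+w}\min\{\eta,\be,\tfrac{1}{2-\theta}\}$, as claimed.

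The computation is elementary; the only points requiring care are confirming that the whole of $\vartheta$ (including its $\xi$-dependence) remains harmless in the $k\to 0^+$ limit, so that (\ref{retrict}) survives and the fifth min-entry collapses to $\tfrac{1}{2-\theta}$, together with the sign inequality $(1-\be)(1-\tfrac q2)\ge 0$ that renders the $1-\tfrac{q(1-\be)}{2}$ entry redundant. Neither is a genuine obstacle, so I expect no substantive difficulty beyond this bookkeeping.
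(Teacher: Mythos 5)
Your proposal is correct and follows essentially the paper's own route: the paper dismisses Corollary 1 as ``an easy consequence of Theorem 2,'' and in the proof of Theorem 1 it exploits $K\in C^\infty(X\times X)$ in exactly the same way by setting $k=0$ in the formulas --- your $k\to 0^+$ limit, with the checks that $\vartheta$ stays bounded and that (\ref{retrict}) is automatic for small $k$, is just the careful version of that step. Your use of $1<q\leq 2$ via $(1-\be)\bigl(1-\tfrac{q}{2}\bigr)\geq 0$ to show the entry $1-\tfrac{q(1-\be)}{2}$ is dominated by $\be$, and the observation that the fifth min-entry collapses onto $\tfrac{1}{2-\theta}$, are precisely the intended reductions, so the proof is complete as written.
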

\begin{remark} When $\eta=\infty$, the corresponding threshold $\epsilon$ is $0$ and it is a least square problem for $q=2$,
which is widely discussed in \cite{WYZ1,WYZ}. If  $\rho$ has a
$\infty$-average type $w$ with $w>0$ and $f_q\in {\cal H}_K$, the learning rate $\|\pi(f^\epsilon_{\bf z})-f_q\|_{L^{2+w}_{\rho_X}}=O(T^{-\frac{1}{2(1+w)}})$
for the least square.
It follows that
the error $\|\pi(f^\epsilon_{\bf z})-f_q\|^2_{L^{2}_{\rho_X}}=O(T^{-\frac{1}{1+w}})$ by $\|\cdot\|_{L^{2}_{\rho_X}}\leq\|\cdot\|
_{L^{2+w}_{\rho_X}}$. Thus, it can be near the optimal rate $O(T^{-1})$ in $L^{2}_{\rho_X}$ space if $w$ is small enough.

When $1<q<2,$ the learning error will be
 $O(T^{-\frac{1}{q+w}\min\{\be,\frac{q+w}{2(q+w-1)},\frac{p+1}{p+2}\}})$ with choice $\eta\geq\beta$, depending only on the
${\cal H}_K$'s approximation ability (\ref{app}) and noise condition (\ref{type}).
Specially, when $q$ goes to 1, it is the quantile regression \cite{XHZ,XHZ2} and the best rate is $O(T^{-\frac{1}{1+w}})$ in this paper
if $\rho$ has a
$\infty$-average type $w$ with $0<w\leq1$ and $f_q\in {\cal H}_K$.
\end{remark}

\section{Comparison and Perturbation Theorem}
Approximation or learning ability of a regularized algorithm for
regression problems can usually be studied by estimating the {\it
excess generalization error} ${\cal E}(f) - {\cal E}(f_{q})$
 for the learned function $f_{{\bf z}}^{\epsilon}$ from the algorithm (\ref{svm}).
However the following comparison theorem would yield bounds for the error $\|f-f_{q}\|_{L^{r}_{\rho_X}}$
in the space $L^{r}_{\rho_X}$ when the noise condition is satisfied.
\begin{theorem}
If $\rho$ has a p-average type $w$, then for any measurable function $f:X\rightarrow [-1, 1]$
we have the inequality
\begin{equation}\label{compa}
\|f-f_q\|_{L^{r}_{\rho_X}}\leq C_{r}\big({\cal E}(f) - {\cal E}(f_{q})\big)^{\frac{1}{q+w}}
\end{equation}
where the constant $C_{r}=2^{\frac{q-1}{q+w}}q^{-\frac{1}{q+w}}(q+w)^{\frac{1}{q+w}}\|(ba^w)^{-1}\|_{L^{p}_{\rho_X}}^{\frac{1}{q+w}}.$
\end{theorem}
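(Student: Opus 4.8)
The plan is to reduce the comparison inequality \eqref{compa} to a one-dimensional pointwise estimate and then integrate it against $\rho_X$ via H\"older's inequality. First I would use that $f_q$ is the pointwise minimizer of the inner risk: writing $F_x(t)=\int_Y|y-t|^q\,d\rho_x(y)$, the non-degeneracy assumption guarantees $f_q(x)=\arg\min_{t}F_x(t)$ for $\rho_X$-a.e.\ $x$, so that
$$\mathcal{E}(f)-\mathcal{E}(f_q)=\int_X\big(F_x(f(x))-F_x(f_q(x))\big)\,d\rho_X=\int_X h(x)\,d\rho_X,$$
where $h(x):=F_x(f(x))-F_x(f_q(x))\ge 0$. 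The theorem then follows once $h(x)$ is bounded below by a suitable power of $|f(x)-f_q(x)|$ weighted by the noise data $a,b$.

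For the pointwise bound I fix $x$ and assume without loss of generality $f(x)>f_q(x)$; set $\Delta=f(x)-f_q(x)$ and $t^*=f_q(x)$. For every $y\le t^*$ the elementary superadditivity inequality $(\alpha+\gamma)^q\ge\alpha^q+\gamma^q$ (valid for $q\ge 1$ and $\alpha,\gamma\ge0$), applied with $\alpha=\Delta$ and $\gamma=t^*-y$, gives $|y-f(x)|^q-|y-t^*|^q=(\Delta+(t^*-y))^q-(t^*-y)^q\ge\Delta^q$. Integrating over the one-sided neighborhood $[t^*-\min\{\Delta,a(x)\},\,t^*]$ and invoking the mass bound \eqref{type} yields $h(x)\ge\Delta^q\,\rho_x([t^*-\min\{\Delta,a(x)\},t^*])\ge\Delta^q\,b(x)\min\{\Delta,a(x)\}^w$. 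Since one may take $a(x)\le 1$ without loss (shrinking $a$ preserves the noise condition) and $|f(x)-f_q(x)|\le 1$ on the relevant range (recall $f$ maps into $[-1,1]$ and the support normalization bounds $f_q$), both regimes $\Delta\lessgtr a(x)$ can be collapsed into an estimate of the form $h(x)\ge c_0\,b(x)a(x)^w\,|f(x)-f_q(x)|^{q+w}$, where a sharper bookkeeping of the two-sided mass is expected to produce the declared constant $c_0=\tfrac{q}{2^{q-1}(q+w)}$.

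Finally I would integrate the pointwise bound over $X$ and extract the $L^{r}_{\rho_X}$ norm by H\"older's inequality. Writing $g=|f-f_q|$ and $\omega=ba^w$, the previous step gives $\mathcal{E}(f)-\mathcal{E}(f_q)\ge c_0\int_X\omega\,g^{q+w}\,d\rho_X$. Splitting $g^{r}=(\omega g^{q+w})^{p/(p+1)}\cdot\omega^{-p/(p+1)}$ and applying H\"older with conjugate exponents $\tfrac{p+1}{p}$ and $p+1$, together with the identity $\tfrac{q+w}{r}=\tfrac{p+1}{p}$ coming from $r=\tfrac{p(q+w)}{p+1}$, I obtain
$$\Big(\int_X g^{r}\,d\rho_X\Big)^{\frac{p+1}{p}}\le\Big(\int_X\omega\,g^{q+w}\,d\rho_X\Big)\,\big\|\omega^{-1}\big\|_{L^p_{\rho_X}},$$
so that $\|g\|_{L^{r}_{\rho_X}}^{q+w}\le c_0^{-1}\,\|(ba^w)^{-1}\|_{L^p_{\rho_X}}\,(\mathcal{E}(f)-\mathcal{E}(f_q))$. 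Taking $(q+w)$-th roots gives exactly \eqref{compa} with the stated $C_{r}$.

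The main obstacle is the sharp pointwise one-dimensional estimate: superadditivity immediately delivers the correct power $q+w$, but pinning down the precise constant $c_0$, and especially controlling the regime $|f(x)-f_q(x)|>a(x)$ (where the inner risk grows only like $\Delta^q$, not $\Delta^{q+w}$), requires combining the truncation at $a(x)$ with the boundedness of the range of $f-f_q$. This is also precisely where $q>1$ enters, and it explains why $q=1$ must be handled separately.
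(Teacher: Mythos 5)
Your outer architecture matches the paper's (reduce to a pointwise inner-risk bound, then integrate via H\"older with exponents $p+1$ and $\tfrac{p+1}{p}$ using $r=\tfrac{p(q+w)}{p+1}$; your last paragraph is in substance identical to the paper's final step), but the middle step contains a genuine gap. From the superadditivity bound $|y-f(x)|^q-|y-t^*|^q\ge\Delta^q$ on $\{y\le t^*\}$ you conclude $h(x)\ge\Delta^q\,\rho_x([t^*-\min\{\Delta,a(x)\},t^*])$ by ``integrating over the one-sided neighborhood.'' This silently discards the contribution of $\{y>t^*\}$, where the integrand is negative (it equals $-\Delta^q$ at $y=f(x)$, for instance), so the inequality does not follow --- and it is in fact false as stated. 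Take $\rho_x=\tfrac12\delta_{t^*-\Delta}+\tfrac12\delta_{t^*+\Delta}$ (a slight symmetric smearing restores non-degeneracy and preserves the example for $q$ near $1$): by symmetry and convexity $t^*$ is the minimizer, and $h(x)=F_x(t^*+\Delta)-F_x(t^*)=(2^{q-1}-1)\Delta^q$, which is strictly smaller than $\Delta^q\,\rho_x([t^*-\Delta,t^*])=\tfrac12\Delta^q$ for every $1<q<\log_2 3$; at $q=1$ the failure is total, since $h(x)=0$ while the left neighborhood carries mass. What is missing is exactly the first-order optimality condition of $t^*_x$, i.e.\ the balance identity \eqref{Taylor}, $\int_{y<t^*}(t^*-y)^{q-1}d\rho_x=\int_{y>t^*}(y-t^*)^{q-1}d\rho_x$, which your sketch never invokes beyond stating that $f_q$ is the argmin; it is the only mechanism available to cancel the negative part on $\{y>t^*\}$.

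This cancellation is where the paper's proof does its real work: it writes $C_{q,x}(t)-C_{q,x}(t^*)=\int_0^t C'_{q,x}(s)\,ds$, substitutes the balance identity into $C'_{q,x}(s)$ to absorb the term $-q\int_{y>s}(y-s)^{q-1}d\rho_x$, and arrives at $C'_{q,x}(s)\ge q\int_{0\le y\le s}\big(y^{q-1}+(s-y)^{q-1}\big)d\rho_x\ge 2^{1-q}qs^{q-1}\rho_x(\{0\le y\le s\})$ --- note this bound uses the mass on the \emph{same} side as the perturbation, not the opposite side as in your sketch --- and then integrates against the noise condition, with the two regimes $t\le a(x)$ and $t>a(x)$ both yielding $\tfrac{2^{1-q}q}{q+w}b(x)a(x)^w t^{q+w}$. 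Your step can be repaired along your own lines: convexity gives $\psi_q(y-f(x))\ge\psi_q(y-t^*)-q(y-t^*)^{q-1}\Delta$ on $\{y>t^*\}$, the balance identity converts the resulting loss into $q\Delta\int_{y<t^*}(t^*-y)^{q-1}d\rho_x$, and the strengthened superadditivity $(u+\Delta)^q-u^q-qu^{q-1}\Delta\ge c_q\Delta^q$ for $0\le u\le\Delta$ then recovers a one-sided mass bound, with a constant $c_q\to0$ as $q\to1$ (consistent with your closing remark on $q=1$). But this is the heart of the comparison theorem, not bookkeeping, and without it your constant $c_0=\tfrac{q}{2^{q-1}(q+w)}$ is unsupported. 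A secondary, fixable slip: since $f$ maps into $[-1,1]$ while $f_q$ takes values in $[-\tfrac12,\tfrac12]$, one has $|f(x)-f_q(x)|\le\tfrac32$, not $\le1$; your collapse of the regime $\Delta>a(x)$ needs this handled by rescaling rather than asserted.
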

\begin{proof}
For a measurable function $f:X\rightarrow [-1,1]$, the generalization error ${\cal E}(f)$ is rewritten as
${\cal E}(f)=\int_XC_{q,x}(f(x))d\rho_X$
where
\begin{equation*}
C_{q,x}(t)=\int_Y\psi_q(y-t)d\rho_x(y)=\int_{y>t}(y-t)^qd\rho_x(y)+\int_{y<t}(t-y)^qd\rho_x(y),\ x\in X.
\end{equation*}
Denote $t^*_x=\min_{t\in \RR}C_{q,x}(t).$ It is obvious that the minimizer $f_q(x)$ of ${\cal E}(f)$
takes the value of $t^*_x$ for each $x\in X$. Noting that the conditional distribution $\rho_x(\cdot)$ is supported
on $[-\frac{1}{2},\frac{1}{2}]$, the minimizer $t^*_x$ can be on $[-\frac{1}{2},\frac{1}{2}]$.  Consider the case $q>1.$
Since the loss function $\psi_q$ is differential and
 $|\frac{d\psi_q(y-t)}{dt}|\leq q|y-t|^{q-1}\leq q$ for all $ y, t\in [-\frac{1}{2},\frac{1}{2}]$, by the corollary of
 Lebesgue control convergence theorem, we can exchange the
order of  of integration and derivation of $C'_{q,x}(t)$ as
 $C'_{q,x}(t)=\frac{d}{dt}\int_Y\psi_q(y-t)d\rho_x(y)=\int_Y\frac{d\psi_q(y-t)}{dt}d\rho_x(y).$
This together with the fact $C'_{q,x}(t^*_x)=0,\forall x\in X,$ we have
\begin{align*}
C'_{q,x}(t^*_x)=q\int_{y<t^*_x}(t^*_x-y)^{q-1}d\rho_x(y)-
q\int_{y>t^*_x}(y-t^*_x)^{q-1}d\rho_x(y)=0,
\end{align*}
which means that
\begin{align}\label{Taylor}
\int_{y<t^*_x}(t^*_x-y)^{q-1}d\rho_x(y)=
\int_{y>t^*_x}(y-t^*_x)^{q-1}d\rho_x(y)
\end{align}
Let $t^*_x=0$ for simply, then we have
$C_{q,x}(t)-C_{q,x}(0)=\int_0^tC'_{q,x}(s)ds,\forall t>0$.
Noting that for $s>0$,
\begin{align*}
&C'_{q,x}(s)=q\big(\int_{y<s}(s-y)^{q-1}d\rho_x(y)-\int_{y>s}(y-s)^{q-1}d\rho_x(y)\big)\nonumber\\
&=q\Big(\int_{y<0}(s-y)^{q-1}d\rho_x(y)+\int_{0\leq y<s}(s-y)^{q-1}d\rho_x(y)-\int_{y>s}(y-s)^{q-1}d\rho_x(y)\Big)\nonumber\\
&\geq q\Big(\int_{y<0}(-y)^{q-1}d\rho_x(y)+\int_{0\leq y<s}(s-y)^{q-1}d\rho_x(y)-\int_{y>s}(y-s)^{q-1}d\rho_x(y)\Big).
\end{align*}
The above first term together with (\ref{Taylor}), then
\begin{align*}
&C'_{q,x}(s)\geq q\Big(\int_{y>0}y^{q-1}d\rho_x(y)+\int_{0\leq y<s}(s-y)^{q-1}d\rho_x(y)-\int_{y>s}(y-s)^{q-1}d\rho_x(y)\Big)\nonumber\\
&\geq q\Big(\int_{y>0}y^{q-1}d\rho_x(y)+\int_{0\leq y<s}(s-y)^{q-1}d\rho_x(y)-\int_{y>s}y^{q-1}d\rho_x(y)\Big)\nonumber\\
&= q\Big(\int_{0<y\leq s}y^{q-1}d\rho_x(y)+\int_{0\leq y<s}(s-y)^{q-1}d\rho_x(y)\Big)\nonumber\\
&= q\int_{0\leq y\leq s}\Big(y^{q-1}+(s-y)^{q-1}\Big)d\rho_x(y)\geq 2^{1-q}qs^{q-1}\rho_x(\{y:0\leq y\leq s\}).
\end{align*}
Thus, $$C_{q,x}(t)-C_{q,x}(0)\geq 2^{1-q}\cdot q\int_0^t s^{q-1}\rho_x(\{y:0\leq y\leq s\})ds.$$
Let us consider the first case $t\in[0,a(x)].$ Noting the noise condition (\ref{type}) and $a(x)\leq 1$, we obtain that
$$C_{q,x}(t)-C_{q,x}(0)\geq 2^{1-q}\cdot q\int_0^ts^{q-1}b(x)s^wds=\frac{2^{1-q}q}{q+w}b(x)t^{q+w}\geq \frac{2^{1-q}q}{q+w}b(x)a(x)^wt^{q+w}.$$
For the second case $t\in[a(x),1],$ we have
\begin{align*}
C_{q,x}(t)-C_{q,x}(0)&\geq2^{1-q}\cdot q\Big(\int_0^{a(x)} s^{q-1}\rho_x(\{y:0\leq y\leq s\})ds+\int^t_{a(x)} s^{q-1}\rho_x(\{y:0\leq y\leq s\})ds\Big)\\
&\geq 2^{1-q}\cdot q\Big(\int_0^{a(x)}s^{q-1}b(x)s^w ds+\int^t_{a(x)}s^{q-1}b(x)a(x)^wds\Big)\\
&=2^{1-q}\cdot q\Big(\frac{b(x)a(x)^wt^q}{q}-\frac{wb(x)a(x)^{q+w}}{q(q+w)}\Big)
\geq2^{1-q}\big(b(x)a(x)^wt^q-\frac{wb(x)a(x)^{w}t^q}{q+w}\big)\\
&=\frac{2^{1-q}q}{q+w}b(x)a(x)^wt^{q}\geq\frac{2^{1-q}q}{q+w}b(x)a(x)^wt^{q+w}.
\end{align*}
In general, we can see that for any $0<t\leq1$,
\begin{equation}\label{tm}
C_{q,x}(t)-C_{q,x}(0)\geq \frac{2^{1-q}q}{q+w}b(x)a(x)^wt^{q+w}.
\end{equation}
By similarity, if $-1\leq t<0$, we also have
\begin{equation}\label{tl}
C_{q,x}(t)-C_{q,x}(0)\geq \frac{2^{1-q}q}{q+w}b(x)a(x)^wt^{q+w}.
\end{equation}
Applying the two above inequalities (\ref{tm}) and (\ref{tl}) with $t=f(x)$ and $t^*_x=f_q(x),$ we have that
$$|f(x)-f_q(x)|^{q+w}\leq 2^{q-1}q^{-1}(q+w)(b(x)a(x)^w)^{-1}\big(C_{q,x}(f(x))-C_{q,x}(f_q(x))\big).$$
By $\frac{p}{p+1}$ power and integration,
\begin{align*}
&\int_X|f(x)-f_q(x)|^{\frac{p(q+w)}{p+1}}d\rho_X\leq2^{\frac{p(q-1)}{p+1}}q^{-\frac{p}{p+1}}(q+w)^{\frac{p}{p+1}}\\
&\int_X\big[(b(x)a(x)^w)^{-1}\big]^{\frac{p}{p+1}}\big(C_{q,x}(f(x))-C_{q,x}(f_q(x))\big)^{\frac{p}{p+1}}d\rho_X.
\end{align*}
This with Holder inequality $\|\cdot\|_{L^{1}_{\rho_X}}\leq \|\cdot\|_{L^{p^*}_{\rho_X}}\|\cdot\|_{L^{q^*}_{\rho_X}},
\frac{1}{p^*}+\frac{1}{q^*}=1$, we obtain that for $p^*=p+1$ and $q^*=\frac{p+1}{p},$
$$\int_X|f(x)-f_q(x)|^{\frac{p(q+w)}{p+1}}d\rho_X\leq2^{\frac{p(q-1)}{p+1}}q^{-\frac{p}{p+1}}(q+w)^{\frac{p}{p+1}}
\|(ba^w)^{-1}\|^{\frac{p}{p+1}}_{L^{p}_{\rho_X}}\big({\cal E}(f) - {\cal E}(f_{q})\big)^{\frac{p}{p+1}}.$$
Then the desired conclusion (\ref{compa}) holds. For $q=1,$ (\ref{compa}) also holds and the proof can be found in \cite{XHZ2}.
\end{proof}
It yields a variance-expectation bound which will be applied in the next section.
\begin{lemma}
Under the same conditions as Theorem 3, for any measurable function $f:X\rightarrow [-1, 1]$,
we have the inequality
\begin{equation}\label{variance}
\mathbb{E}\big\{(\psi_q(f(x)-y)-\psi_q(f_q(x)-y))^2\big\}\leq C_{\theta} ({\cal E}(f) - {\cal E}(f_{q})\big)^\theta
\end{equation}
where the power index $\theta$ is defined as (\ref{thetar}) and $C_{\theta}= C_{r}^2+2^{2-r}(1+\|f_q\|_\infty^{2-r})C_{r}^r$.
\end{lemma}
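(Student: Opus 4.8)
The plan is to reduce the whole estimate to the comparison inequality (\ref{compa}) of Theorem 3 via a pointwise control of the integrand. First I would fix $x$ and $y$ and estimate $\psi_q(f(x)-y)-\psi_q(f_q(x)-y)$ directly. Since $f$ takes values in $[-1,1]$, $\|f_q\|_\infty\leq\frac{1}{2}$, and $\rho_x$ is supported on $[-\frac{1}{2},\frac{1}{2}]$, both arguments $f(x)-y$ and $f_q(x)-y$ lie in a fixed bounded interval; because $q>1$ the function $\psi_q(u)=|u|^q$ is differentiable with locally bounded derivative there, so the mean value theorem controls $|\psi_q(f(x)-y)-\psi_q(f_q(x)-y)|$ by a constant multiple of $|f(x)-f_q(x)|$, uniformly in $y$. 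Squaring and integrating first in $y$ and then in $x$, this produces the pointwise-to-integral reduction $\mathbb{E}\{(\psi_q(f(x)-y)-\psi_q(f_q(x)-y))^2\}\leq \|f-f_q\|_{L^2_{\rho_X}}^2$ (the Lipschitz constant of $\psi_q$ being tracked into the final prefactor).

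The second step is to pass from the $L^2_{\rho_X}$ norm to the $L^r_{\rho_X}$ norm of Theorem 3, and here the argument splits according to the sign of $r-2$, which is exactly the dichotomy encoded in $\theta=\min\{\frac{2}{q+w},\frac{p}{p+1}\}$ in (\ref{thetar}). Since $r=\frac{p(q+w)}{p+1}$, the condition $r\geq2$ is equivalent to $\frac{2}{q+w}\leq\frac{p}{p+1}$, i.e. $\theta=\frac{2}{q+w}$; and $r<2$ gives $\theta=\frac{p}{p+1}$. When $r\geq2$, because $\rho_X$ is a probability measure, Jensen's inequality yields $\|f-f_q\|_{L^2_{\rho_X}}\leq\|f-f_q\|_{L^r_{\rho_X}}$, and (\ref{compa}) turns this into a bound of order $C_r^2({\cal E}(f)-{\cal E}(f_q))^{2/(q+w)}$. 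When $r<2$, Jensen runs the wrong way, so I instead invoke the uniform bound $|f(x)-f_q(x)|\leq 1+\|f_q\|_\infty$ and split $|f-f_q|^2=|f-f_q|^{2-r}|f-f_q|^r\leq(1+\|f_q\|_\infty)^{2-r}|f-f_q|^r$, whence $\|f-f_q\|_{L^2_{\rho_X}}^2\leq(1+\|f_q\|_\infty)^{2-r}\|f-f_q\|_{L^r_{\rho_X}}^r$; the elementary inequality $(1+t)^{2-r}\leq 2^{2-r}(1+t^{2-r})$ then produces the factor $2^{2-r}(1+\|f_q\|_\infty^{2-r})$, and (\ref{compa}) converts $\|f-f_q\|_{L^r_{\rho_X}}^r$ into a bound of order $C_r^r({\cal E}(f)-{\cal E}(f_q))^{r/(q+w)}=C_r^r({\cal E}(f)-{\cal E}(f_q))^{p/(p+1)}$.

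Finally I would merge the two regimes into the single estimate (\ref{variance}). In either case the exponent of ${\cal E}(f)-{\cal E}(f_q)$ is precisely $\theta$, and the prefactor is either $C_r^2$ or $2^{2-r}(1+\|f_q\|_\infty^{2-r})C_r^r$; replacing the relevant prefactor by the sum $C_{\theta}=C_r^2+2^{2-r}(1+\|f_q\|_\infty^{2-r})C_r^r$ of the two yields a single constant valid in both cases, which is the asserted bound. I expect the main obstacle to be the regime $r<2$: there the $L^2_{\rho_X}$ norm dominates the $L^r_{\rho_X}$ norm, so one must exploit the a priori boundedness of $f$ and $f_q$ to trade the surplus power $2-r$ for the uniform bound — this is exactly where the factor $(1+\|f_q\|_\infty^{2-r})$, and the necessity of keeping the loss arguments in a fixed interval, genuinely enter. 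A secondary point demanding care is the pointwise Lipschitz step, which uses $q>1$ essentially (for $q=1$ one falls back on the separate quantile-regression treatment referenced after Theorem 2).
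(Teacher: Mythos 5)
Your proposal is correct and follows essentially the same route as the paper: a pointwise Lipschitz bound on $\psi_q$ (using $q>1$ together with the uniform bounds on $f$, $f_q$ and $|y|\le\frac{1}{2}$) reduces the variance to $\mathbb{E}|f-f_q|^2$, which is then compared with the $L^{r}_{\rho_X}$ norm by Jensen/H\"older when $r\ge 2$ and by trading the surplus power $2-r$ against the a priori boundedness when $r<2$, exactly reproducing the dichotomy $\theta=\min\{\frac{2}{q+w},\frac{p}{p+1}\}$ from (\ref{thetar}) and the constant structure of $C_\theta$ via Theorem 3. The only cosmetic difference is that in the case $r<2$ you bound $|f-f_q|^{2-r}\le(1+\|f_q\|_\infty)^{2-r}$ and then invoke $(1+t)^{2-r}\le 2^{2-r}(1+t^{2-r})$, whereas the paper applies the power inequality directly to get $2^{2-r}(\|f\|_\infty^{2-r}+\|f_q\|_\infty^{2-r})$ with $\|f\|_\infty\le1$; both yield the identical factor $2^{2-r}(1+\|f_q\|_\infty^{2-r})$ (and your remark that the Lipschitz constant $q(2+\|f_q\|_\infty^{q-1})$ should appear in the final prefactor is, if anything, more careful than the paper's stated $C_\theta$, which silently drops it).
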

\begin{proof}
By the continuity of $\psi_q(u)$ and $|y|\leq\frac{1}{2}$, we see that
\begin{align}
&|\psi_q(f(x)-y)-\psi_q(f_q(x)-y)|\leq q(\|f\|_\infty^{q-1}+\|f_q\|_\infty^{q-1}+1)|f(x)-f_q(x)|\nonumber\\
&\leq q(2+\|f_q\|_\infty^{q-1})|f(x)-f_q(x)|\nonumber.
\end{align}
It implies that
$$\mathbb{E}\big\{(\psi_q(f(x)-y)-\psi_q(f_q(x)-y))^2\big\}\leq q^2(\|f_q\|_\infty^{q-1}+2)^2\mathbb{E}|f(x)-f_q(x)|^2.$$
If $r>2,$ then
\begin{equation*}
\mathbb{E}|f(x)-f_q(x)|^2\leq \big\{\mathbb{E}|f(x)-f_q(x)|^{r}\big\}^{\frac{2}{r}}\leq C_{r}^2({\cal E}(f) - {\cal E}(f_{q})\big)^{\frac{2}{q+w}}.
\end{equation*}
Else,
\begin{align}
&\mathbb{E}|f(x)-f_q(x)|^2\leq\mathbb{E}\big\{|f(x)-f_q(x)|^{2-r}\cdot|f(x)-f_q(x)|^{r}\big\}\nonumber\\
&\leq 2^{2-r}(\|f\|_\infty^{2-r}+\|f_q\|_\infty^{2-r})\mathbb{E}|f(x)-f_q(x)|^{r}\nonumber\\
&\leq 2^{2-r}(1+\|f_q\|_\infty^{2-r})C_{r}^r({\cal E}(f) - {\cal E}(f_{q})\big)^{\frac{p}{p+1}}\nonumber.
\end{align}
Combining the above two cases, we can get the conclusion (\ref{variance}).
\end{proof}

The threshold $\epsilon$ changes with the sample size $\epsilon=\epsilon(T)$ and plays a crucial role in
the design of algorithm (\ref{svm}). By Taylor expansion, we have the following relation
\begin{equation}\label{relation}
\psi_q^\epsilon(u)\leq \psi(u)\leq \psi_q^\epsilon(u)+q|u|^{q-1}\epsilon,\quad\forall u\in \RR.
\end{equation}
When the threshold $\epsilon\rightarrow0,$ the $\epsilon$-insensitive $q$-norm loss
$\psi_q^\epsilon$ converges to the $q$-norm function $\psi_q$ almost surely. In the following, we shall study the approximation of the
target function $f_q$ by $f_q^\epsilon$ which is the minimizer of the $\epsilon$-{\it generalization error}
${\cal E}^\epsilon(f)=\int_Z\psi^\epsilon_q(f(x)-y)d\rho$ for $\epsilon>0$. Denote
\begin{equation}\label{ecqx}
C_{q,x}^\epsilon(t)=\int_Y\psi_q^\epsilon(y-t)d\rho_x(y)=\int_{y>t+\epsilon}(y-t-\epsilon)^qd\rho_x(y)
+\int_{y<t-\epsilon}(t-y-\epsilon)^qd\rho_x(y),\ x\in X.
\end{equation}
and $t^\epsilon_x$ is the minimizer of $C_{q,x}^\epsilon(t)$.
By the same proof procedure as (\ref{Taylor}) in Theorem 3, we also get
\begin{align}\label{etaylor}
&\int_{y>t^\epsilon_x+\epsilon}(y-t^\epsilon_x-\epsilon)^{q-1}d\rho_x(y)=
\int_{y<t^\epsilon_x-\epsilon}(t^\epsilon_x-\epsilon-y)^{q-1}d\rho_x(y)\nonumber\\
\end{align}
and $f_q^\epsilon$ takes the value of $t^\epsilon_x$ at each $x\in X$.
 Then
the perturbation properties hold. We use some ideas from \cite{HFWZaa} in the proof.

\begin{proposition}
For $\epsilon>0,$ then
\begin{equation}\label{per1}
\|f_q^\epsilon-f_q\|_\infty\leq \epsilon.
\end{equation}
For any measurable function $f$ on $X,$ we have
\begin{equation}\label{per2}
{\cal E}(f) - {\cal E}(f_{q})\leq {\cal E}^\epsilon(f) - {\cal E}^\epsilon(f_{q}^\epsilon)+q(\|f\|_\infty^{q-1}+1)\epsilon
\end{equation}
\end{proposition}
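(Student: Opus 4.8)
The plan is to prove the two assertions separately: (\ref{per1}) by a pointwise convexity argument on the conditional objectives $C_{q,x}$ and $C_{q,x}^\epsilon$, and (\ref{per2}) by combining the minimality of $f_q^\epsilon$ with the elementary comparison (\ref{relation}) between $\psi_q$ and $\psi_q^\epsilon$.

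For (\ref{per1}) I would fix $x\in X$ and work with the univariate functions $C_{q,x}$ and $C_{q,x}^\epsilon$, whose minimizers are $f_q(x)=t^*_x$ and $f_q^\epsilon(x)=t^\epsilon_x$. After translating so that $t^*_x=0$ (as in the proof of Theorem 3), it suffices to show $|t^\epsilon_x|\le\epsilon$. Since $\psi_q^\epsilon$ is convex for $q\ge1$, the function $C_{q,x}^\epsilon$ is convex, so $(C_{q,x}^\epsilon)'$ is nondecreasing; hence it is enough to verify the two sign conditions $(C_{q,x}^\epsilon)'(\epsilon)\ge0$ and $(C_{q,x}^\epsilon)'(-\epsilon)\le0$, which force the minimizer into $[-\epsilon,\epsilon]$. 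Differentiating under the integral sign (justified exactly as for (\ref{etaylor})) gives
\[
(C_{q,x}^\epsilon)'(\epsilon)=q\Big(\int_{y<0}(-y)^{q-1}d\rho_x(y)-\int_{y>2\epsilon}(y-2\epsilon)^{q-1}d\rho_x(y)\Big).
\]
I would then invoke the balance identity (\ref{Taylor}) to replace $\int_{y<0}(-y)^{q-1}d\rho_x$ by $\int_{y>0}y^{q-1}d\rho_x$, and use the monotonicity $(y-2\epsilon)^{q-1}\le y^{q-1}$ on $\{y>2\epsilon\}$ together with $\{y>2\epsilon\}\subset\{y>0\}$ to conclude the bracket is nonnegative. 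The sign at $-\epsilon$ follows by the same computation with the roles reversed. Taking the supremum over $x\in X$ yields (\ref{per1}).

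For (\ref{per2}) I would decompose
\[
{\cal E}(f)-{\cal E}(f_q)=\big[{\cal E}(f)-{\cal E}^\epsilon(f)\big]+\big[{\cal E}^\epsilon(f)-{\cal E}^\epsilon(f_q^\epsilon)\big]+\big[{\cal E}^\epsilon(f_q^\epsilon)-{\cal E}(f_q)\big].
\]
The middle bracket is exactly the leading term on the right of (\ref{per2}). For the last bracket, minimality of $f_q^\epsilon$ gives ${\cal E}^\epsilon(f_q^\epsilon)\le{\cal E}^\epsilon(f_q)$, and the lower bound in (\ref{relation}) gives ${\cal E}^\epsilon(f_q)\le{\cal E}(f_q)$, so this bracket is $\le0$. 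For the first bracket, the upper bound in (\ref{relation}) gives ${\cal E}(f)-{\cal E}^\epsilon(f)\le q\epsilon\int_Z|f(x)-y|^{q-1}d\rho$, and since $\rho_x$ is supported so that $|y|\le\frac12$, an elementary estimate on $|f(x)-y|^{q-1}$ bounds the integral by $\|f\|_\infty^{q-1}+1$. Combining the three pieces gives (\ref{per2}).

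The main obstacle is the sign computation in (\ref{per1}): everything hinges on rewriting $(C_{q,x}^\epsilon)'(\pm\epsilon)$ so that the balance identity (\ref{Taylor}) for the unperturbed minimizer cancels one of the two integrals, after which the monotonicity of $s\mapsto s^{q-1}$ (valid for $q\ge1$) delivers the correct sign and convexity closes the argument. The estimate (\ref{per2}) is comparatively routine; the only care needed is in tracking the constant in the pointwise bound on $|f(x)-y|^{q-1}$, where subadditivity of $s\mapsto s^{q-1}$ is cleanest for $1<q\le2$ and a power-mean inequality handles the constant for larger $q$.
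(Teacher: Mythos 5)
Your treatment of (\ref{per2}) is essentially the paper's own: the paper applies the comparison (\ref{relation}) with $|y|\le\frac{1}{2}$ to get ${\cal E}(f)-{\cal E}(f_q)\le{\cal E}^\epsilon(f)-{\cal E}^\epsilon(f_q)+q(\|f\|_\infty^{q-1}+1)\epsilon$ and passes to ${\cal E}^\epsilon(f_q^\epsilon)$ via ${\cal E}^\epsilon(f_q^\epsilon)\le{\cal E}^\epsilon(f_q)$; your three-bracket decomposition carries the same content and merely makes the minimality step explicit. For (\ref{per1}), however, you take a genuinely different route. The paper argues by contradiction: assuming $f_q^\epsilon(x)-f_q(x)>\epsilon$ at some $x$, it chains the two balance identities (\ref{Taylor}) and (\ref{etaylor}) into a string of inequalities that must all be equalities, forcing $\rho_x(\{y:y>f_q(x)\})=0$ and $\rho_x(\{y:y<f_q^\epsilon(x)-\epsilon\})=0$ simultaneously, which contradicts $\rho_x$ being a probability measure; the symmetric case is handled likewise. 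You instead use convexity of $C^\epsilon_{q,x}$ and check first-order sign conditions at $t^*_x\pm\epsilon$, needing only the unperturbed identity (\ref{Taylor}); your derivative formula $(C_{q,x}^\epsilon)'(\epsilon)=q\big(\int_{y<0}(-y)^{q-1}d\rho_x-\int_{y>2\epsilon}(y-2\epsilon)^{q-1}d\rho_x\big)$ (after translating $t^*_x$ to $0$) and the monotonicity step are correct, and the argument is arguably cleaner since it never invokes the first-order condition (\ref{etaylor}) of the perturbed problem. What it buys less of: the non-strict conditions $(C^\epsilon_{q,x})'(\epsilon)\ge0$ and $(C^\epsilon_{q,x})'(-\epsilon)\le0$ only guarantee that \emph{some} minimizer of $C^\epsilon_{q,x}$ lies in $[-\epsilon,\epsilon]$; to conclude $|t^\epsilon_x|\le\epsilon$ for \emph{the} minimizer $f_q^\epsilon(x)$ you must still rule out a flat piece of $C^\epsilon_{q,x}$ extending past $\pm\epsilon$, either by sharpening the inequalities to strict ones using non-degeneracy of $\rho_x$ (the work the paper's equality analysis effectively does) or by citing uniqueness from Proposition 2, whose proof is independent of Proposition 1 -- a one-line patch, not a structural gap. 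Finally, one blemish you share with the paper: the stated constant rests on $(\|f\|_\infty+\frac{1}{2})^{q-1}\le\|f\|_\infty^{q-1}+1$, which subadditivity gives for $1<q\le2$ but which fails for $q>2$ (e.g.\ $q=3$, $\|f\|_\infty=1$ gives $2.25>2$), and your proposed power-mean fix yields $2^{q-2}\|f\|_\infty^{q-1}+\frac{1}{2}$ rather than $\|f\|_\infty^{q-1}+1$; this is harmless downstream, since Lemma 2 only ever applies the bound with $\|\pi(f_{\bf z}^\epsilon)\|_\infty\le1$ through the explicit constant $q2^{q-1}\epsilon$, but the constant in the proposition as stated is accurate only for $1<q\le2$.
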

\begin{proof}
Suppose that there exist a $x\in X$ satisfying $f_q^\epsilon(x)-f_q(x)> \epsilon$. Consider the case $q>1.$
Together with the fact (\ref{Taylor}) and $t_x^*=f_q(x)$, we note that
\begin{align}\label{equation1}
&\int_{y<f_q^\epsilon(x)-\epsilon}(f_q^\epsilon(x)-\epsilon-y)^{q-1}d\rho_x(y)\geq
\int_{y<f_q(x)}(f_q^\epsilon(x)-\epsilon-y)^{q-1}d\rho_x(y)\nonumber\\
&\geq\int_{y<f_q(x)}(f_q(x)-y)^{q-1}d\rho_x(y)=\int_{y>f_q(x)}(y-f_q(x))^{q-1}d\rho_x(y)
\end{align}
It is obvious that $f_q^\epsilon(x)+\epsilon>f_q(x)$ by the hypothesis that $f_q^\epsilon(x)-f_q(x)> \epsilon$ for any $\epsilon>0$.
By (\ref{etaylor}) with $t_x^\epsilon=f_q^\epsilon(x)$, we also get
\begin{align}\label{equation2}
&\int_{y>f_q(x)}(y-f_q(x))^{q-1}d\rho_x(y)\geq \int_{y>f_q^\epsilon(x)+\epsilon}(y-f_q(x))^{q-1}d\rho_x(y)\nonumber\\
&\geq\int_{y>f_q^\epsilon(x)+\epsilon}(y-f_q^\epsilon(x)-\epsilon)^{q-1}d\rho_x(y)=\int_{y<f_q^\epsilon(x)-\epsilon}(f_q^\epsilon(x)-\epsilon-y)^{q-1}d\rho_x(y).
\end{align}
Combining (\ref{equation2}) with (\ref{equation1}), we know that
\begin{align}
&\int_{y<f_q^\epsilon(x)-\epsilon}(f_q^\epsilon(x)-\epsilon-y)^{q-1}d\rho_x(y)=
\int_{y<f_q(x)}(f_q^\epsilon(x)-\epsilon-y)^{q-1}d\rho_x(y)\nonumber\\
&=\int_{y<f_q(x)}(f_q(x)-y)^{q-1}d\rho_x(y)=\int_{y>f_q(x)}(y-f_q(x))^{q-1}d\rho_x(y)\nonumber\\
&=\int_{y>f_q^\epsilon(x)+\epsilon}(y-f_q(x))^{q-1}d\rho_x(y)=\int_{y>f_q^\epsilon(x)+\epsilon}(y-f_q^\epsilon(x)-\epsilon)^{q-1}d\rho_x(y)
\end{align}

The above equalities hold if and only if $\rho_x(\{y:y>f_q(x)\})=0$ and $\rho_x(\{y:y<f_q^\epsilon(x)-\epsilon\})=0$ at the same time. Immediately,
we see that $\rho_x(\{y:y\leq f_q(x)\})=1-\rho_x(\{y:y>f_q(x)\})=1$.
By the hypothesis $f_q^\epsilon(x)-f_q(x)>\epsilon$, it follows that
\begin{align*}
\rho_x(\{y:y\leq f_q(x)\})\leq \rho_x(\{y:y<f_q^\epsilon(x)-\epsilon\})=0.
\end{align*}
This is contradiction. By similarity, we
get that $f_q^\epsilon(x)-f_q(x)< -\epsilon$ for each $x\in X$. Then the desired conclusion (\ref{per1}) holds.
By the relation (\ref{relation}) and $|y|\leq \frac{1}{2}$, we can see that
\begin{align*}
{\cal E}(f) - {\cal E}(f_{q})\leq {\cal E}^\epsilon(f) - {\cal E}^\epsilon(f_{q})+q\|f-y\|_\infty^{q-1}\epsilon\leq
{\cal E}^\epsilon(f) - {\cal E}^\epsilon(f_{q})+q(\|f\|_\infty^{q-1}+1)\epsilon.
\end{align*}
Then the desired conclusion (\ref{per2}) holds.
\end{proof}
We recall the fact that the conditional distribution $\rho_x(\cdot)$ is non-degenerate for each $x\in X,$
then the uniqueness of the minimizer $f_q^{\epsilon}$ is stated as following. For simply, we denote $f_q^\epsilon$ as the target function $f_q$
and ${\cal E}^\epsilon(f)$ as the generalization error ${\cal E}(f)$ with the $q$-norm loss $\psi_q$ when $\epsilon=0$ in the next proposition.
\begin{proposition}For $0\leq\epsilon\leq\frac{1}{2},$ the function $f_q^{\epsilon}$ is the unique minimizer of the
$\epsilon$-generalization error ${\cal E}^\epsilon(f)$.
\end{proposition}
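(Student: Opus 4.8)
The plan is to reduce the uniqueness of the minimizer of ${\cal E}^\epsilon$ to a pointwise question about the univariate functions $C_{q,x}^\epsilon$. Writing ${\cal E}^\epsilon(f)=\int_X C_{q,x}^\epsilon(f(x))\,d\rho_X$, one has ${\cal E}^\epsilon(f)\geq\int_X \min_{t}C_{q,x}^\epsilon(t)\,d\rho_X$, with equality precisely when $f(x)$ minimizes $C_{q,x}^\epsilon$ for $\rho_X$-almost every $x$. Hence it suffices to show that for each fixed $x\in X$ the minimizer $t_x^\epsilon$ of $C_{q,x}^\epsilon$ is \emph{unique}; then any minimizer of ${\cal E}^\epsilon$ must coincide with $f_q^\epsilon$ almost everywhere. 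First I would record that $\psi_q^\epsilon$ is convex for $q\geq 1$ (the positive part $(|u|-\epsilon)_+$ is convex and $s\mapsto s^q$ is convex and nondecreasing on $[0,\infty)$), so that $C_{q,x}^\epsilon$ is convex and its set of minimizers is a closed interval $[t_1,t_2]$.

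Next I would argue by contradiction, assuming $t_1<t_2$. For $q>1$ the loss $\psi_q^\epsilon$ is $C^1$, so, justifying the exchange of derivative and integral exactly as in Theorem 3, the derivative can be written as $(C_{q,x}^\epsilon)'(t)=q\,(G(t)-H(t))$ with $G(t)=\int_{y<t-\epsilon}(t-y-\epsilon)^{q-1}d\rho_x(y)$ and $H(t)=\int_{y>t+\epsilon}(y-t-\epsilon)^{q-1}d\rho_x(y)$. Here $G$ is nondecreasing and $H$ is nonincreasing in $t$. Since $C_{q,x}^\epsilon$ is constant on $[t_1,t_2]$, its derivative vanishes there, forcing $G\equiv H$ on $(t_1,t_2)$; comparing $G(t')-G(t)=H(t')-H(t)$ for $t<t'$, the left side is $\geq 0$ and the right side is $\leq 0$, so each of $G$ and $H$ must itself be constant on $(t_1,t_2)$.

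Then I would turn the constancy of $G$ and $H$ into a statement about the support of $\rho_x$. Fix $t<t'$ in $(t_1,t_2)$. Constancy of $G$ gives $\int_{y<t-\epsilon}\big[(t'-y-\epsilon)^{q-1}-(t-y-\epsilon)^{q-1}\big]d\rho_x(y)+\int_{t-\epsilon\le y<t'-\epsilon}(t'-y-\epsilon)^{q-1}d\rho_x(y)=0$; both integrands are strictly positive on their domains because $q>1$, so $\rho_x(\{y<t'-\epsilon\})=0$. Symmetrically, constancy of $H$ yields $\rho_x(\{y>t+\epsilon\})=0$. Consequently $\rho_x$ is supported in $[t'-\epsilon,t+\epsilon]$, an interval of length $2\epsilon-(t'-t)$: if this is negative the support is empty, which is impossible for a probability measure, and otherwise its length is strictly less than $2\epsilon\leq 1$. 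In either case $\rho_x$ charges no open subinterval of $(-\tfrac12,\tfrac12)$ lying outside this short interval, contradicting the non-degeneracy of $\rho_x$ on its support $[-\tfrac12,\tfrac12]$. This forces $t_1=t_2$, giving uniqueness of $t_x^\epsilon$ and hence of $f_q^\epsilon$; the degenerate case $\epsilon=0$ is identical (the support interval is then empty outright), while the excluded case $q=1$ is the classical quantile setting treated in \cite{XHZ2}.

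The step I expect to be the main obstacle is converting the flatness of the derivative into the support constraint: one must use the \emph{strict} monotonicity of $s\mapsto s^{q-1}$ (valid only for $q>1$) to pass from ``$G$ constant'' to ``$\rho_x$ vanishes below $t'-\epsilon$'', and then keep careful track of the interval lengths so that the bound $\epsilon\leq\tfrac12$ is exactly what makes the resulting support interval too short to carry a full-support measure on $[-\tfrac12,\tfrac12]$.
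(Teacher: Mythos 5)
Your proof is correct and takes essentially the same route as the paper's: both exploit the first-order condition $\int_{y>t+\epsilon}(y-t-\epsilon)^{q-1}d\rho_x=\int_{y<t-\epsilon}(t-y-\epsilon)^{q-1}d\rho_x$ at two distinct minimizers and use the strict monotonicity of $s\mapsto s^{q-1}$ (hence $q>1$) to force $\rho_x$ to vanish on both tails, which contradicts non-degeneracy on the support $[-\frac{1}{2},\frac{1}{2}]$ precisely because $\epsilon\leq\frac{1}{2}$ makes the surviving interval too short. Your repackaging via convexity of $\psi_q^\epsilon$ and the monotone functions $G,H$, together with the explicit reduction from uniqueness of the ${\cal E}^\epsilon$-minimizer to pointwise uniqueness of $t_x^\epsilon$, merely spells out steps the paper leaves implicit (its chain of equalities between the conditions at $t_1(x)$ and $t_2(x)$ encodes the same monotonicity argument).
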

\begin{proof}
Suppose that $f_q^{\epsilon}$ is not the unique minimizer.
For some $x\in X,$ there exists
$t_1(x)< t_2(x)$ such that they are both the minimizers of $C_{q,x}^\epsilon(t)$ by (\ref{ecqx})
and satisfy the equality (\ref{etaylor}) with $t^\epsilon_x=t_1(x)$ or $t^\epsilon_x=t_2(x)$ .
Applying (\ref{etaylor}) with $t^\epsilon_x=t_1(x)$ and $t_1(x)< t_2(x)$, it follows that
\begin{align*}
&\int_{y<t_2(x)-\epsilon}(t_2(x)-\epsilon-y)^{q-1}d\rho_x(y)\geq
\int_{y<t_1(x)-\epsilon}(t_2(x)-\epsilon-y)^{q-1}d\rho_x(y)\\
&\geq\int_{y<t_1(x)-\epsilon}(t_1(x)-\epsilon-y)^{q-1}d\rho_x(y)=\int_{y>t_1(x)+\epsilon}(y-t_1(x)-\epsilon)^{q-1}d\rho_x(y)\\
&\geq \int_{y>t_2(x)+\epsilon}(y-t_1(x)-\epsilon)^{q-1}d\rho_x(y)
\geq\int_{y>t_2(x)+\epsilon}(y-t_2(x)-\epsilon)^{q-1}d\rho_x(y).
\end{align*}
Applying (\ref{etaylor}) with $t^\epsilon_x=t_2(x)$ again, we see that the first term of the above inequality
$\int_{y<t_2(x)-\epsilon}(t_2(x)-\epsilon-y)^{q-1}d\rho_x(y)$ is equal to the last term $\int_{y>t_2(x)+\epsilon}(y-t_2(x)-\epsilon)^{q-1}d\rho_x(y)$.
This implies
\begin{align*}
&\int_{y<t_2(x)-\epsilon}(t_2(x)-\epsilon-y)^{q-1}d\rho_x(y)=
\int_{y<t_1(x)-\epsilon}(t_2(x)-\epsilon-y)^{q-1}d\rho_x(y)\\
&=\int_{y<t_1(x)-\epsilon}(t_1(x)-\epsilon-y)^{q-1}d\rho_x(y)=\int_{y>t_1(x)+\epsilon}(y-t_1(x)-\epsilon)^{q-1}d\rho_x(y)\\
&= \int_{y>t_2(x)+\epsilon}(y-t_1(x)-\epsilon)^{q-1}d\rho_x(y)
=\int_{y>t_2(x)+\epsilon}(y-t_2(x)-\epsilon)^{q-1}d\rho_x(y).
\end{align*}
The above equalities hold if and only if $\rho_x(\{y:y< t_2(x)-\epsilon\})=0$ and $\rho_x(\{y:y> t_1(x)+\epsilon\})=0$ simultaneously .
Since $\rho_x(\cdot)$ is non-degenerate and supported on $[-\frac{1}{2},\frac{1}{2}]$, then the values of $t_1(x)$ and $t_2(x)$ must satisfy
$ t_2(x)-\epsilon\leq -\frac{1}{2}$ and $ t_1(x)+\epsilon\geq\frac{1}{2}$. By the hypothesis $ t_1(x)<t_2(x)$,
we get $\epsilon>\frac{1}{2}$. This is
contradict with $0\leq\epsilon\leq\frac{1}{2}$. The proof is completed.
\end{proof}

\section{Error Decomposition and Sample Error}
Now we can conduct an error decomposition.
\begin{lemma}Define $f_\la$ by (\ref{regularfun}). Let $0\leq\epsilon\leq\frac{1}{2},$ then
\begin{equation}
{\cal E}(\pi(f_{\bf z}^\epsilon))-{\cal E}(f_q) +\la\|f_{\bf z}^\epsilon\|_K^2\leq S_1+S_2+{\cal D}(\la)+q2^{q-1}\epsilon,
\end{equation}
where \begin{align}
&S_1=\big[{\cal E}(\pi(f_{\bf z}^\epsilon))-{\cal E}(f_q)\big]-\big[{\cal E}_{\bf z}(\pi(f_{\bf z}^\epsilon))-{\cal E}_{\bf z}(f_q)\big],\\
&S_2=\big[{\cal E}_{\bf z}(f_\la)-{\cal E}_{\bf z}(f_q)\big]-\big[{\cal E}(f_\la)-{\cal E}(f_q)\big].
\end{align}
\end{lemma}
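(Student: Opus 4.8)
The plan is to run the standard three-term error decomposition of learning theory; the only genuinely new ingredient is the bookkeeping between the $q$-norm loss $\psi_q$ and the insensitive loss $\psi_q^\epsilon$, which is what produces the residual term $q2^{q-1}\epsilon$.

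First I would insert the empirical quantities for the pair $\big(\pi(f_{\bf z}^\epsilon),f_q\big)$, writing
\begin{align*}
{\cal E}(\pi(f_{\bf z}^\epsilon))-{\cal E}(f_q)+\la\|f_{\bf z}^\epsilon\|_K^2
= S_1+\big[{\cal E}_{\bf z}(\pi(f_{\bf z}^\epsilon))-{\cal E}_{\bf z}(f_q)\big]+\la\|f_{\bf z}^\epsilon\|_K^2,
\end{align*}
so that $S_1$ is precisely the population-minus-sample gap in the statement. It then remains to bound the empirical expression ${\cal E}_{\bf z}(\pi(f_{\bf z}^\epsilon))-{\cal E}_{\bf z}(f_q)+\la\|f_{\bf z}^\epsilon\|_K^2$ by $S_2+{\cal D}(\la)+q2^{q-1}\epsilon$.

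For that empirical expression I would chain four elementary comparisons. \emph{(a)} To pass from $\psi_q$ to $\psi_q^\epsilon$ on $\pi(f_{\bf z}^\epsilon)$, apply the right inequality of (\ref{relation}) sample-wise: since $\|\pi(f_{\bf z}^\epsilon)\|_\infty\leq 1$ and $|y_t|\leq\frac12$, one has $|\pi(f_{\bf z}^\epsilon)(x_t)-y_t|\leq\frac32\leq 2$, so the perturbation cost is at most $q2^{q-1}\epsilon$ per sample and hence ${\cal E}_{\bf z}(\pi(f_{\bf z}^\epsilon))\leq {\cal E}^\epsilon_{\bf z}(\pi(f_{\bf z}^\epsilon))+q2^{q-1}\epsilon$, where ${\cal E}^\epsilon_{\bf z}$ is the empirical risk formed with the insensitive loss. \emph{(b)} The projection does not increase $\psi_q^\epsilon$: as $\psi_q^\epsilon$ is nondecreasing in $|u|$ and $y_t\in[-1,1]$ forces $|\pi(f_{\bf z}^\epsilon)(x_t)-y_t|\leq|f_{\bf z}^\epsilon(x_t)-y_t|$, we get ${\cal E}^\epsilon_{\bf z}(\pi(f_{\bf z}^\epsilon))\leq {\cal E}^\epsilon_{\bf z}(f_{\bf z}^\epsilon)$. \emph{(c)} By the defining property of $f_{\bf z}^\epsilon$ in (\ref{svm}), ${\cal E}^\epsilon_{\bf z}(f_{\bf z}^\epsilon)+\la\|f_{\bf z}^\epsilon\|_K^2\leq {\cal E}^\epsilon_{\bf z}(f_\la)+\la\|f_\la\|_K^2$. \emph{(d)} The left inequality of (\ref{relation}), namely $\psi_q^\epsilon\leq\psi_q$, gives ${\cal E}^\epsilon_{\bf z}(f_\la)\leq {\cal E}_{\bf z}(f_\la)$. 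Stringing \emph{(a)}--\emph{(d)} together and subtracting ${\cal E}_{\bf z}(f_q)$ yields
\begin{align*}
{\cal E}_{\bf z}(\pi(f_{\bf z}^\epsilon))-{\cal E}_{\bf z}(f_q)+\la\|f_{\bf z}^\epsilon\|_K^2
\leq {\cal E}_{\bf z}(f_\la)-{\cal E}_{\bf z}(f_q)+\la\|f_\la\|_K^2+q2^{q-1}\epsilon.
\end{align*}

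Finally I would insert the population quantities for $f_\la$ to expose $S_2$, writing ${\cal E}_{\bf z}(f_\la)-{\cal E}_{\bf z}(f_q)=S_2+\big[{\cal E}(f_\la)-{\cal E}(f_q)\big]$ and recognizing that ${\cal E}(f_\la)-{\cal E}(f_q)+\la\|f_\la\|_K^2={\cal D}(\la)$ by the definition (\ref{regularfun}) of $f_\la$ as the minimizer of the regularization error. Combining the three insertions gives the asserted inequality. The step deserving the most care is \emph{(a)}: one must keep $\psi_q$, $\psi_q^\epsilon$ and their empirical counterparts rigorously separate and verify the uniform bound $|\pi(f_{\bf z}^\epsilon)(x_t)-y_t|\leq 2$ that turns the factor $q|u|^{q-1}\epsilon$ of (\ref{relation}) into the clean constant $q2^{q-1}\epsilon$; the remaining manipulations are the routine telescoping of the decomposition.
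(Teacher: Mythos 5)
Your proposal is correct and takes essentially the same route as the paper: the identical decomposition exposing $S_1$, $S_2$ and ${\cal D}(\la)$, with the middle empirical term controlled by the same four-step chain --- the comparison (\ref{relation}) at cost $q2^{q-1}\epsilon$, monotonicity of $\psi_q^\epsilon$ under the projection $\pi$, the defining minimization property of $f_{\bf z}^\epsilon$ in (\ref{svm}), and $\psi_q^\epsilon\leq\psi_q$ applied to $f_\la$. Your explicit check that $\|\pi(f_{\bf z}^\epsilon)\|_\infty+|y|\leq\frac{3}{2}\leq 2$ is exactly the estimate the paper uses in its bound (\ref{eq1}).
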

\begin{proof}
By the same procedure in \cite{SunWu,WuZhou,WYZ1,WYZ}, ${\cal E}(\pi(f_{\bf z}^\epsilon))-{\cal E}(f_q) +\la\|f_{\bf z}^\epsilon\|_K^2$ can be expressed as
\begin{align*}
&\big\{{\cal E}(\pi(f_{\bf z}^\epsilon))-{\cal E}_{\bf z}(\pi(f_{\bf z}^\epsilon))\big\}
+\big\{[{\cal E}_{\bf z}(\pi(f_{\bf z}^\epsilon))+\la\|f_{\bf z}^\epsilon\|_K^2]
-[{\cal E}_{\bf z}(f_\la)+\la\|f_{\la}\|_K^2] \big\}\\
&+\big\{{\cal E}_{\bf z}(f_\la)-{\cal E}(f_\la)\big\}+\big\{{\cal E}(f_\la)-{\cal E}(f_q) +\la\|f_\la\|_K^2\big\}.
\end{align*}
The relation (\ref{relation}) yields
\begin{align}\label{eq1}
&{\cal E}_{\bf z}(\pi(f_{\bf z}^\epsilon))=\frac{1}{T}\sum_{i=1}^T\psi_q(\pi(f_{\bf z}^\epsilon)(x_i)-y_i)\nonumber\\
&\leq \frac{1}{T}\sum_{i=1}^T\psi_q^\epsilon(\pi(f_{\bf z}^\epsilon)(x_i)-y_i)+
q(\|\pi(f_{\bf z}^\epsilon)\|_\infty+|y|)^{q-1}\epsilon\nonumber\\
&\leq {\cal E}_{\bf z}^\epsilon(\pi(f_{\bf z}^\epsilon))+q2^{q-1}\epsilon
\end{align}
and
\begin{align}\label{eq2}
{\cal E}^\epsilon_{\bf z}(f_\la)=\frac{1}{T}\sum_{i=1}^T\psi_q^\epsilon(f_\la(x_i)-y_i)
\leq\frac{1}{T}\sum_{i=1}^T\psi_q(f_\la(x_i)-y_i)={\cal E}_{\bf z}(f_\la).
\end{align}
The restriction $0\leq\epsilon\leq\frac{1}{2}$ implies
${\cal E}^\epsilon_{\bf z}(\pi(f_{\bf z}^\epsilon))\leq {\cal E}^\epsilon_{\bf z}(f_{\bf z}^\epsilon)$.
By (\ref{eq1}) and (\ref{eq2}),
 then we have
 \begin{align*}
 &[{\cal E}_{\bf z}(\pi(f_{\bf z}^\epsilon))+\la\|f_{\bf z}^\epsilon\|_K^2]
 -[{\cal E}_{\bf z}(f_\la)+\la\|f_{\la}\|_K^2]\\
 &\leq [{\cal E}_{\bf z}^\epsilon(\pi(f_{\bf z}^\epsilon))+\la\|f_{\bf z}^\epsilon\|_K^2]
 -[{\cal E}_{\bf z}(f_\la)+\la\|f_{\la}\|_K^2]+q2^{q-1}\epsilon\\
 &\leq [{\cal E}_{\bf z}^\epsilon(f_{\bf z}^\epsilon)+\la\|f_{\bf z}^\epsilon\|_K^2]
 -[{\cal E}_{\bf z}(f_\la)+\la\|f_{\la}\|_K^2]+q2^{q-1}\epsilon\\
 &\leq [{\cal E}_{\bf z}^\epsilon(f_{\bf z}^\epsilon)+\la\|f_{\bf z}^\epsilon\|_K^2]
 -[{\cal E}_{\bf z}^\epsilon(f_\la)+\la\|f_{\la}\|_K^2]+q2^{q-1}\epsilon.
 \end{align*}
Since $[{\cal E}_{\bf z}^\epsilon(f_{\bf z}^\epsilon)+\la\|f_{\bf z}^\epsilon\|_K^2]
 -[{\cal E}_{\bf z}^\epsilon(f_\la)+\la\|f_{\la}\|_K^2]\leq0,$ we have
  $$[{\cal E}_{\bf z}(\pi(f_{\bf z}^\epsilon))+\la\|f_{\bf z}^\epsilon\|_K^2]
-[{\cal E}_{\bf z}(f_\la)+\la\|f_{\la}\|_K^2]\leq q2^{q-1}\epsilon.$$
Then the desired conclusion holds.
\end{proof}
In the above error decomposition, the first two terms $S_1$ and $S_2$ are called {\it sample error}. For the second term $S_2,$
we get the following estimation.
\begin{corollary}Assume that (\ref{variance}),
there exists a subset $Z_{1,\delta}$ of $Z^T$ with measure at least $1-\frac{2\delta}{3}$
such that for any ${\bf z}\in Z_{1,\delta}$,
\end{corollary}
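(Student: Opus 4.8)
The plan is to treat $S_2$ as the deviation of an empirical mean from its expectation for a single, sample-independent function, and to control it by the one-sided Bernstein inequality. Introduce the random variable $\xi(z)=\psi_q(f_\la(x)-y)-\psi_q(f_q(x)-y)$. Since the regularization function $f_\la$ of (\ref{regularfun}) does not depend on ${\bf z}$, the values $\{\xi(z_t)\}_{t=1}^T$ are i.i.d.\ with mean $\mathbb{E}\xi={\cal E}(f_\la)-{\cal E}(f_q)$ and $S_2=\frac1T\sum_{t=1}^T\xi(z_t)-\mathbb{E}\xi$. No covering-number argument is needed here --- that is reserved for the harder term $S_1$ --- so it suffices to estimate the range and the variance of $\xi$ and insert them into Bernstein, spending confidence $\tfrac{2\delta}{3}$ so that, together with the $\tfrac{\delta}{3}$ later allotted to $S_1$, the total failure probability is at most $\delta$.

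For the two Bernstein inputs I would argue as follows. The definition of $f_\la$ gives ${\cal E}(f_\la)-{\cal E}(f_q)+\la\|f_\la\|_K^2={\cal D}(\la)$, whence both ${\cal E}(f_\la)-{\cal E}(f_q)\le{\cal D}(\la)$ and $\|f_\la\|_K\le\sqrt{{\cal D}(\la)/\la}$, and therefore $\|f_\la\|_\infty\le\kappa\|f_\la\|_K$ with $\kappa=\sup_{x\in X}\sqrt{K(x,x)}$ by the reproducing property. The mean-value estimate for $\psi_q$ on a bounded interval (exactly as in the proof of Lemma 1) then yields $|\xi(z)|\le q(\|f_\la\|_\infty+\|f_q\|_\infty+1)^{q-1}|f_\la(x)-f_q(x)|$, so the range $|\xi-\mathbb{E}\xi|\le B$ is bounded by an explicit quantity depending on $\la$ through $\|f_\la\|_K$. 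For the variance, the bound (\ref{variance}) gives $\mathbb{E}\xi^2\le C_\theta({\cal E}(f_\la)-{\cal E}(f_q))^\theta\le C_\theta{\cal D}(\la)^\theta$.

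Feeding $B$ and $\sigma^2=\mathbb{E}\xi^2$ into the one-sided Bernstein inequality with $\tau=\log\frac{3}{2\delta}$ produces a set $Z_{1,\delta}\subset Z^T$ of measure at least $1-\tfrac{2\delta}{3}$ on which
$$S_2\le\frac{2B\tau}{3T}+\sqrt{\frac{2C_\theta{\cal D}(\la)^\theta\tau}{T}}.$$
I would then tidy the square-root term by Young's inequality, splitting it into a small multiple of ${\cal E}(f_\la)-{\cal E}(f_q)$ (bounded by ${\cal D}(\la)$) plus a term depending only on $T$, $\delta$ and $\theta$, so that it is absorbable when the pieces are recombined in the error decomposition of Lemma 2. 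The first term $2B\tau/(3T)$, carrying the factor $\|f_\la\|_\infty^{q-1}\sim\la^{(q-1)(\be-1)/2}$, is precisely what will contribute the entry $1-\tfrac{q(1-\be)\al}{2}$ to the rate index $\Lambda$ of Theorem 2.

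The delicate point --- and the main obstacle --- is the boundedness hypothesis behind (\ref{variance}) and Theorem 3: both are stated only for functions valued in $[-1,1]$, whereas $f_\la$ need not satisfy $\|f_\la\|_\infty\le1$ once $\be<1$, since then $\|f_\la\|_K$ may grow like $\la^{(\be-1)/2}$. One must therefore justify the variance estimate for $f_\la$ itself, either by re-running the comparison argument of Theorem 3 with constants depending on $\|f_\la\|_\infty\le\kappa\sqrt{{\cal D}(\la)/\la}$, or by using that ${\cal E}(f_\la)-{\cal E}(f_q)\le{\cal D}(\la)$ is already an explicit deterministic bound and estimating the second moment directly. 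Tracking how this $\la$-dependence --- entering through both $B$ and the variance constant --- propagates into the final exponent is the bookkeeping this step must supply.
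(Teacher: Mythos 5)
You have correctly located the crux --- the variance--expectation bound (\ref{variance}) of Lemma 1 is only available for functions with values in $[-1,1]$, while $f_\la$ may have $\|f_\la\|_\infty\sim\kappa\sqrt{{\cal D}(\la)/\la}$ growing as $\la\to0$ when $\be<1$ --- but your proposal leaves this gap open rather than closing it. Applying (\ref{variance}) to $f_\la$ directly, as your main line does, is not licensed; and your two suggested repairs do not recover the stated bound (\ref{s2bound}). Re-running the comparison argument of Theorem 3 on $[-B,B]$ with $B=\|f_\la\|_\infty$ degrades the constant $C_\theta$ by powers of $B$, and since after the Young-inequality split the variance constant enters through a term of order $\big(C_\theta(B)\log\frac{3}{\delta}/T\big)^{1/(2-\theta)}$, this injects an extra factor polynomial in $\la^{-1}$ into exactly the term that should be $\la$-free in (\ref{s2bound}); the $\la$-dependence there must be confined to the $O(1/T)$ term carrying $\|f_\la\|_\infty^q$, or the exponent $\Lambda$ of Theorem 2 deteriorates. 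Estimating $\mathbb{E}\xi^2$ ``directly'' from ${\cal E}(f_\la)-{\cal E}(f_q)\le{\cal D}(\la)$ is likewise not a proof: without a comparison-type inequality there is no route from an excess-risk bound to a second-moment bound with the exponent $\theta$.

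The paper's missing idea is to split $S_2=S_{2,1}+S_{2,2}$ through the projection, with $S_{2,1}=\big[{\cal E}_{\bf z}(f_\la)-{\cal E}_{\bf z}(\pi(f_\la))\big]-\big[{\cal E}(f_\la)-{\cal E}(\pi(f_\la))\big]$ and $S_{2,2}=\big[{\cal E}_{\bf z}(\pi(f_\la))-{\cal E}_{\bf z}(f_q)\big]-\big[{\cal E}(\pi(f_\la))-{\cal E}(f_q)\big]$. Because $|y|\le\frac12<1$, projecting $f_\la(x)$ onto $[-1,1]$ moves it closer to $y$, so the variable $\xi(z)=\psi_q(f_\la(x)-y)-\psi_q(\pi(f_\la)(x)-y)$ in $S_{2,1}$ is \emph{nonnegative} and bounded by $q(1+5\|f_\la\|_\infty^q)$; for such variables $\mathbb{E}(\xi-\mathbb{E}\xi)^2\le\|\xi\|_\infty\mathbb{E}\xi$, and one-sided Bernstein gives $S_{2,1}\le\frac{7q(1+5\|f_\la\|_\infty^q)\log\frac{3}{\delta}}{6T}+{\cal E}(f_\la)-{\cal E}(\pi(f_\la))$ --- no variance--expectation bound for the unbounded function is ever needed. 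In $S_{2,2}$ the function $\pi(f_\la)$ does take values in $[-1,1]$, so Lemma 1 applies legitimately with $f=\pi(f_\la)$, the variable is bounded by $2^q$, and Bernstein yields the $\big(2C_\theta\log\frac{3}{\delta}/T\big)^{1/(2-\theta)}$ term with a $\la$-free constant. Adding the two bounds, the expectations telescope: ${\cal E}(f_\la)-{\cal E}(\pi(f_\la))+{\cal E}(\pi(f_\la))-{\cal E}(f_q)={\cal E}(f_\la)-{\cal E}(f_q)\le{\cal D}(\la)$, which is precisely where the ${\cal D}(\la)$ in (\ref{s2bound}) comes from. Your single-variable Bernstein plan is structurally sound for the bounded part, but without this decomposition the ``delicate point'' you flag is fatal to the exponent, not mere bookkeeping.
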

\begin{align}\label{s2bound}
S_2\leq \frac{7q(1+5\|f_\la\|^q_\infty)\log\frac{3}{\delta}}{6T}+\frac{2^{q+1}\log\frac{3}{\delta}}{3T}
+\big(\frac{2C_{\theta}\log\frac{3}{\delta}}{T}\big)^{\frac{1}{2-\theta}}+{\cal D}(\la).
\end{align}
\begin{proof}
we can decompose $S_2$ into two
parts $S_2=S_{2,1}+S_{2,2}$,
where
\begin{align*}
&S_{2,1}=\big[{\cal E}_{\bf z}(f_\la)-{\cal E}_{\bf z}(\pi(f_\la))\big]-\big[{\cal E}(f_\la)-{\cal E}(\pi(f_\la))\big],\\
&S_{2,2}=\big[{\cal E}_{\bf z}(\pi(f_\la))-{\cal E}_{\bf z}(f_q)\big]-\big[{\cal E}(\pi(f_\la))-{\cal E}(f_q)\big].
\end{align*}
For $S_{2,1},$ we apply the one-side Bernstein inequality \cite{DGL} to the random variable
$\xi(z)=\psi_q(f_\la(x)-y)-\psi_q(\pi(f_\la)(x)-y)$. For the continuity of the loss $\psi_q(u),$
it satisfies $0\leq \xi\leq q(\|f_\la\|_\infty+\|\pi(f_\la)\|_\infty+|y|)^{q-1}|\pi(f_\la)(x)-f_\la(x)|
\leq q(2\|f_\la\|_\infty^{q-1}+1)(1+\|f_\la\|_\infty)\leq q(1+5\|f_\la\|^q_\infty).$
Noting that $|\xi-\mathbb{E}(\xi)|\leq q(1+5\|f_\la\|^q_\infty)$ and $\mathbb{E}(\xi-\mathbb{E}(\xi))^2\leq q(1+5\|f_\la\|^q_\infty) \mathbb{E}(\xi),$
then there exists a subset $Z'_{1,\delta}$ of $Z^T$ with measure at least $1-\frac{\delta}{3}$
such that for any ${\bf z}\in Z'_{1,\delta}$,
\begin{equation}\label{s21bound}
S_{2,1}\leq \frac{7q(1+5\|f_\la\|^q_\infty)\log\frac{3}{\delta}}{6T}+{\cal E}(f_\la)-{\cal E}(\pi(f_\la)).
\end{equation}
For $S_{2,2},$ we take the random variable $\xi(z)=\psi_q(\pi(f_\la)(x)-y)-\psi_q(f_q(x)-y)$ which is bounded by $2^q$
and estimate the variance by Lemma 1 with $f=\pi(f_\la)$.  Applying the one-side Bernstein inequality again, we find
that there exists a subset ${\bf z}''_{1,\delta}$ of $Z^T$ with measure at least $1-\frac{\delta}{3}$
such that for any ${\bf z}\in Z''_{1,\delta}$,
\begin{equation}\label{s22bound}
S_{2,2}\leq \frac{2^{q+1}\log\frac{3}{\delta}}{3T}
+\big(\frac{2C_{\theta}\log\frac{3}{\delta}}{T}\big)^{\frac{1}{2-\theta}}+{\cal E}(\pi(f_\la))-{\cal E}(f_q)
\end{equation}
Combing the bound (\ref{s21bound}) and (\ref{s22bound}), we get the desired conclusion (\ref{s2bound}).
\end{proof}
Denote $\kappa=\sup_{x\in X}\sqrt{K(x,x)}.$ For $R\geq1$, let $B_R=\{{\bf z}\in Z^T:\|f\|_K\leq R\}$.
\begin{corollary}
Assume that (\ref{cover}) and (\ref{variance}). For any $f\in B_{R}$, there exists a subset $Z_{2,\delta}$ of $Z^T$ with measure at least $1-\frac{\delta}{3}$ such that for all ${\bf z}\in Z_{2,\delta},$
\begin{equation*}
S_1\leq \frac{1}{2}\big({\cal E}(\pi(f)-{\cal E}(f_q)\big)+12\varepsilon^*(R,T,\delta/3)
\end{equation*}
where
\begin{align}\label{s1bound}
&\varepsilon^*(R,T,\delta/3)\leq \left(2^{q+3}+(8C_{\theta})^{\frac{1}{2-\theta}}\right)\log\frac{3}{\delta}T^{-\frac{1}{2-\theta}}\nonumber\\
&\qquad+\left(2^{q+3}C_kq^k(2+\|f_q\|_\infty)^{k(q-1)}+(8C_{\theta}C_kq^k(2+\|f_q\|_\infty)^{k(q-1)})^{\frac{1}{2+k-\theta}}\right)R^{\frac{k}{k+1}}T^{-\frac{1}{2+k-\theta}}.
\end{align}
\end{corollary}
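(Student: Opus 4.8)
The plan is to view $S_1$ as a one-sided empirical deviation and to bound it uniformly over the ball $B_R$ by feeding the variance--expectation estimate (\ref{variance}) and the capacity assumption (\ref{cover}) into a covering-number/Bernstein argument. For $f\in B_R$ I set
$$
\xi_f(z)=\psi_q(\pi(f)(x)-y)-\psi_q(f_q(x)-y),
$$
so that $\mathbb{E}\xi_f={\cal E}(\pi(f))-{\cal E}(f_q)\geq0$ and $S_1=\mathbb{E}\xi_f-\frac1T\sum_{i=1}^T\xi_f(z_i)$. Since $\pi(f)\in[-1,1]$, $\|f_q\|_\infty\leq\frac12$ and $|y|\leq\frac12$, the variable $\xi_f$ is uniformly bounded by a constant of order $2^q$, while Lemma 1 applied to $\pi(f)$ supplies the power-type variance bound $\mathbb{E}\xi_f^2\leq C_\theta(\mathbb{E}\xi_f)^\theta$. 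Boundedness together with this variance--mean relation are precisely the two inputs a one-sided ratio concentration inequality requires.

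First I would fix a scale $\tau>0$ and build a $\tau$-net of the class $\{\xi_f:f\in B_R\}$ in the uniform norm. As $\psi_q$ is Lipschitz on the relevant bounded interval with constant at most $q(2+\|f_q\|_\infty)^{q-1}$, such a net is obtained from a net of $B_R$ of radius $\tau/(q(2+\|f_q\|_\infty)^{q-1})$; combining ${\cal N}(B_R,\cdot)={\cal N}(B_1,\cdot/R)$ with (\ref{cover}) bounds the number of centers $N$ by $\log N\leq C_kq^k(2+\|f_q\|_\infty)^{k(q-1)}(R/\tau)^k$, which is the source of the constants in (\ref{s1bound}). On each center I would apply the one-sided Bernstein inequality \cite{DGL} with the variance bound, pass to a union bound over the $N$ centers (replacing $\log\frac3\delta$ by $\log N+\log\frac3\delta$), and transfer the estimate to an arbitrary $f\in B_R$ at the cost of a discretization error of order $\tau$, using also $\mathbb{E}\xi_f\le\mathbb{E}\xi_{f_j}+\tau$ for the nearest center $f_j$. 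The Bernstein standard-deviation term is then converted into additive form by the elementary inequality $\sqrt{a\,x^\theta}\le\frac12 x+c_\theta\,a^{1/(2-\theta)}$ (valid for $0\le\theta\le1$, $a,x\ge0$) applied with $x=\mathbb{E}\xi_f$ and $a\propto(\log N+\log\frac3\delta)/T$; this absorbs half of $\mathbb{E}\xi_f={\cal E}(\pi(f))-{\cal E}(f_q)$ into the left-hand side and leaves exactly the $12\varepsilon^*$-type remainder.

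The main obstacle is the explicit evaluation of $\varepsilon^*(R,T,\delta/3)$ and the attendant bookkeeping. After the absorption step the remainder consists of a first-order Bernstein term of order $2^q(\log N+\log\frac3\delta)/T$, a variance term of order $\big(C_\theta(\log N+\log\frac3\delta)/T\big)^{1/(2-\theta)}$, and the discretization error $\tau$; splitting $\log N+\log\frac3\delta$ into its two summands separates a capacity-free contribution of order $\log\frac3\delta\,T^{-\frac1{2-\theta}}$ from the entropy contribution carrying $(R/\tau)^k$. Choosing $\tau$ so as to balance the entropy-driven terms against the linear discretization error is what produces the power $R^{\frac{k}{k+1}}$ and, through the variance exponent, the rate $T^{-\frac1{2+k-\theta}}$, collapsing the several error contributions into the single expression (\ref{s1bound}). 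Tracking the explicit constants $2^{q+3}$, $(8C_\theta)^{1/(2-\theta)}$ and $C_kq^k(2+\|f_q\|_\infty)^{k(q-1)}$ through this optimization, while keeping the overall success probability at $1-\delta/3$, is routine but is where essentially all of the computation sits.
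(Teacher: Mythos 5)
Your proposal is correct and follows essentially the same route as the paper: the paper applies the ready-made ratio probability inequality with covering numbers from \cite{WYZ} to the class $\{\psi_q(\pi(f)(x)-y)-\psi_q(f_q(x)-y):\|f\|_K\leq R\}$, using exactly your three inputs (boundedness by $2^q$, the variance bound (\ref{variance}) from Lemma 1, and the Lipschitz-plus-(\ref{cover}) covering estimate), and your net/one-sided-Bernstein/union-bound argument with Young-type absorption of $\frac{1}{2}({\cal E}(\pi(f))-{\cal E}(f_q))$ is precisely the standard proof of that cited inequality. The only cosmetic difference is that the paper obtains $\varepsilon^*(R,T,\delta/3)$ as the positive root of an implicit polynomial equation and bounds it by a maximum of four monomial terms, whereas you balance the net radius $\tau$ explicitly, which yields the same bound (\ref{s1bound}) after using $R^{k/(2+k-\theta)}\leq R^{k/(k+1)}$ and $T^{-1/(1+k)}\leq T^{-1/(2+k-\theta)}$.
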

\begin{proof}
Consider the function set
$${\cal G}=\big\{\psi_q(\pi(f)(x)-y)-\psi_q(f_q(x)-y):\|f\|_K\leq R\big\}.$$
A function from this set $g(z)=\psi_q(\pi(f)(x)-y)-\psi_q(f_q(x)-y)$ satisfies
$\mathbb{E}g\geq0,$ $|g(z)|\leq 2^q$ and $\mathbb{E}g^2\leq C_{\theta}\big(\mathbb{E}g\big)^{\theta}$
by (\ref{variance}). The continuity of the loss implies
$|\psi_q(\pi(f)(x)-y)-\psi_q(f_q(x)-y)|\leq q(2+\|f_q\|_\infty)^{q-1}|\pi(f)(x)-f_q(x)|$. Then
$${\cal N}({\cal G},\varepsilon)\leq {\cal N}\left(B_1,\frac{\varepsilon}{q(2+\|f_q\|_\infty)^{q-1}R}\right).$$
We apply the ratio probability inequality with the covering number in \cite{WYZ},
\begin{align*}
&\mathrm{Prob}_{{\bf z}\in Z}\left\{\sup_{\|f\|_K\leq R}\frac{\big[{\cal E}(\pi(f)-{\cal E}(f_q)\big]-\big[{\cal E}_{\bf z}(\pi(f)-{\cal E}_{\bf z}(f_q)\big]}{\sqrt{\big({\cal E}(\pi(f)-{\cal E}(f_q)\big)^\theta+\varepsilon^\theta}}\leq 4\varepsilon^{1-\theta/2}\right\}\\
&\geq 1-{\cal N}\left(B,\frac{\varepsilon}{q(2+\|f_q\|_\infty)^{q-1}R}\right)\exp\left\{-\frac{T\varepsilon^{2-\theta}}{2C_{\theta}+2^{q+1}\varepsilon^{1-\theta}}\right\}\\
&\geq 1-\exp\left\{C_k\left(\frac{q(2+\|f_q\|_\infty)^{q-1}R}{\varepsilon}\right)^k-\frac{T\varepsilon^{2-\theta}}{2C_{\theta}+2^{q+1}\varepsilon^{1-\theta}}\right\}.
\end{align*}
We take $\varepsilon^*(R,T,\delta/3)$ to be the positive solution to the equation
\begin{equation*}
C_k\left(\frac{q(2+(\|f_q\|_\infty)R}{\varepsilon}\right)^k-\frac{T\varepsilon^{2-\theta}}{2C_{\theta}+2^{q+1}\varepsilon^{1-\theta/2}}=\log\frac{\delta}{3}.
\end{equation*}
It can be expressed as
\begin{align*}
&\varepsilon^{2+k-\theta}-\frac{2^{q+1}}{T}\log\frac{3}{\delta}\varepsilon^{1+k-\theta}-\frac{2C_{\theta}}{T}\log\frac{3}{\delta}\varepsilon^k
-\frac{2^{q+1}C_kq^k(2+\|f_q\|_\infty)^{k(q-1)}R^k}{T}\varepsilon^{1-\theta} \\&-\frac{2C_{\theta} C_kq^k(2+\|f_q\|_\infty)^{k(q-1)}R^k}{T} =0.
\end{align*}
The positive solution  $\varepsilon^*(R,T,\delta/3)$ to this equation can be bounded as
\begin{align}\label{ubound}
&\varepsilon^*(R,T,\delta/3)
\leq \max\big\{\frac{2^{q+3}}{T}\log\frac{3}{\delta},\left(\frac{8C_{\theta}}{T}\log\frac{3}{\delta}\right)^{\frac{1}{2-\theta}},\nonumber\\
&\left(\frac{2^{q+3}C_kq^k(2+\|f_q\|_\infty)^{k(q-1)}R^k}{T}\right)^{\frac{1}{1+k}},\left(\frac{8C_{\theta}C_kq^k(2+\|f_q\|_\infty)^{k(q-1)}R^k}{T}\right)^{\frac{1}{2+k-\theta}}\big\}.
\end{align}
Then there exists a subset $Z_{2,\delta}$ of $Z^T$ with measure at least $1-\frac{\delta}{3}$ such that for all ${\bf z}\in Z_{2,\delta},$
\begin{equation*}
\sup_{\|f\|_K\leq R}\frac{\big[{\cal E}(\pi(f)-{\cal E}(f_q)\big]-\big[{\cal E}_{\bf z}(\pi(f)-{\cal E}_{\bf z}(f_q)\big]}{\sqrt{\big({\cal E}(\pi(f)-{\cal E}(f_q)\big)^\theta+(\varepsilon^*(R,T,\delta/3))^\theta}}\leq 4(\varepsilon^*(R,T,\delta/3))^{1-\theta/2}.
\end{equation*}
For any ${\bf z}\in B(R)\bigcap Z_{2,\delta},$ we have
\begin{align*}
S_1&\leq 4(\varepsilon^*(R,T,\delta/3))^{1-\theta/2}\sqrt{\big({\cal E}(\pi(f)-{\cal E}(f_q)\big)^\theta+(\varepsilon^*(R,T,\delta/3))^\theta}\\
&\leq \frac{\theta}{2}\big({\cal E}(\pi(f)-{\cal E}(f_q)\big)+(1-\frac{\theta}{2})4^{1/(1-\theta/2)}\varepsilon^*(R,T,\delta/3)
+4\varepsilon^*(R,T,\delta/3)\\&\leq \frac{1}{2}\big({\cal E}(\pi(f)-{\cal E}(f_q)\big)+12\varepsilon^*(R,T,\delta/3).
\end{align*}
Putting the above bounds into (\ref{ubound}), then we get the desired conclusion (\ref{s1bound}).
\end{proof}

\section{Estimating Total Error by Iteration}\label{mainproof}
This section is devoted to estimating total error $\|\pi(f^\epsilon_{\bf z})-f_q\|_{L^r_{\rho_X}}.$ To apply Corollary 2 and Corollary 3 for error
analysis, we get the rough bound $$\|f_{\bf z}^\epsilon\|_K\leq \la^{-\frac{1}{2}},\quad \forall {\bf z}\in Z^T$$
by taking $f=0$ in (\ref{svm}). This bound will be improved by iteration technique used in \cite{WuZhou}.
For $R>0,$ denote
$${\cal W}(R)=\{{\bf z} \in Z^T:\|f^\epsilon_{\bf z}\|_K\leq R\}.$$
\begin{lemma}
Take $\la=T^{-\al}, \epsilon=T^{-\eta}$ with $0<\al\leq 1$, $0<\eta\leq \infty.$ Let $0<\xi<1.$
If $\rho$ satisfy the noise condition (\ref{type}) and (\ref{app}), (\ref{cover}) hold,
then for any $0<\delta<1,$ with confidence $1-\delta,$ there exists a subset $V_R$ of $Z^T$ with measure at most $\delta$ such that
holds
\begin{equation}\label{fzbound}
\|f_{\bf z}^\epsilon\|_K\leq 4A_2(1+\sqrt{q2^{q}}+2\sqrt{{\cal D}_0}
 +\sqrt{12q{\cal D}_0^{q/2}}+\sqrt{A_1})\big(\log\frac{3}{\xi}\big)^2\sqrt{\log\frac{3}{\delta}}T^\vartheta,\
 \forall {\bf z}\in {\cal W}(R)\setminus V(R),
\end{equation}
where $\vartheta=\max\big\{\frac{[\al(2+k-\theta)-1](1+k)}{(2+k-\theta)(2+k)}+\xi,\frac{\al-\eta}{2},\frac{\al(1-\be)}{2},
\frac{\al}{2}+\frac{q(1-\be)\al}{4}-\frac{1}{2},\frac{\al}{2}-\frac{1}{2(2-\theta)}\big\}$.
\begin{proof}
Applying Corollary 2 and Corollary 3 with Lemma 2, we know that for any
${\bf z}\in {\cal W}(R)\bigcap Z_{1,\delta}\bigcap Z_{2,\delta}, R>1,$
\begin{align*}
 {\cal E}(\pi(f_{\bf z}^\epsilon))-{\cal E}(f_q) +\la\|f_{\bf z}^\epsilon\|_K^2&\leq q2^{q}\epsilon
+4{\cal D}(\la) +12q\|f_\la\|_\infty^qT^{-1}\log\frac{3}{\delta}\\
 &+A_1T^{-\frac{1}{2-\theta}}\log\frac{3}{\delta}
 +A_2R^{\frac{k}{k+1}}T^{-\frac{1}{2+k-\theta}}.
 \end{align*}
 where $A_1$ and $A_2$ is given by
 \begin{align*}
A_1=106+20C_{\theta}^{\frac{1}{2-\theta}},
 A_2=2^{q+4}(C_kq^k(2+\|f_q\|_\infty)^{k(q-1)})^{\frac{1}{1+k}}+16(C_{\theta}C_kq^k(2+\|f_q\|_\infty)^{k(q-1)})^{\frac{1}{2+k-\theta}}.\\
 \end{align*}
 Let $V_R$ be a set whose measure is at most $\delta.$
 Putting $\la=T^{-\al}, \epsilon=T^{-\eta}$ with $0<\al\leq 1$, $0<\eta\leq \infty$  and (\ref{app}) into
 the above bound, then for any $R>1$ we have
 \begin{equation*}
 \|f_{\bf z}^\epsilon\|_K\leq a_TR^{\frac{k}{2+2k}}+b_T,\quad {\bf z}\in {\cal W}(R)\setminus V_R,
 \end{equation*}
 where the constants $a_T$ and $b_T$ are given by
 \begin{equation*}
 a_T=\sqrt{A_2}T^{\frac{\al}{2}-\frac{1}{2(2+k-\theta)}}, b_T=\left\{\sqrt{q2^{q}}+2\sqrt{{\cal D}_0}
 +\sqrt{12q{\cal D}_0^{q/2}\log\frac{3}{\delta}}+\sqrt{A_1\log\frac{3}{\delta}}\right\}T^{\zeta}
 \end{equation*}
 with $\zeta=\max\left\{\frac{\al-\eta}{2},\frac{\al(1-\be)}{2},\frac{\al}{2}+\frac{q(1-\be)\al}{4}-\frac{1}{2},\frac{\al}{2}-\frac{1}{2(2-\theta)} \right\}.$
 It follows that
 \begin{equation*}
 {\cal W}(R)\subseteq {\cal W}( a_TR^{\frac{k}{2+2k}}+b_T)\cup  V_R,
 \end{equation*}
 Let us apply the above relation iteratively to a sequence $\{R^{(j)}\}_{j=0}^J$ defined by $R^{(0)}=\la^{-\frac{1}{2}}$ and
 $R^{(j)}=a_T\big(R^{(j-1)}\big)^{\frac{k}{2+2k}}+b_T$ where $J\in\NN$ will be determined later. Then
 ${\cal W}(R^{(j-1)})\subseteq {\cal W}(R^{(j)})\cup V_{R^{(j-1)}}.$ Noting that ${\cal W}(R^{(0)})=Z^T,$ then
 $$Z^T={\cal W}(R^{(0)})\subseteq{\cal W}(R^{(1)})\cup V_{R^{(0)}}\subseteq\cdots{\cal W}(R^{(J)})
 \cup\big(\cup_{j=0}^{J-1}V_{R^{(j)}}\big).$$
 As the measure of $V_{R^{(j)}}$ is at most $\delta,$ we know that the measure of $\cup_{j=0}^{J-1}V_{R^{(j)}}$ is at most $J\delta.$
 Hence ${\cal W}(R^{(J)})$ has measure at least $1-J\delta.$

 Denote $\Delta=\frac{k}{2+2k}\leq \frac{1}{2}$. The definition of the sequence $\{R^{(j)}\}_{j=0}^J$ implies that
 $$R^{(J)}=a_T^{1+\Delta+\Delta^2+\cdots+\Delta^{J-1}}\big(R^{(0)}\big)^{\Delta^J}+\sum_{j=1}^{J-1}
 a_T^{1+\Delta+\Delta^2+\cdots+\Delta^{j-1}}b_m^{\Delta^j}+b_m.$$
 The first term
 \begin{align*}
 a_T^{1+\Delta+\Delta^2+\cdots+\Delta^{J-1}}\big(R^{(0)}\big)^{\Delta^J}&=
(A_2)^{\frac{1-\Delta^J}{2(1-\Delta)}}T^{\big(\frac{\al}{2}-\frac{1}{2(2+k-\theta)}\big)
 \frac{1-\Delta^J}{1-\Delta}}T^{\frac{\al}{2}\Delta^J}\\
 &\leq A_2 T^{\frac{[\al(2+k-\theta)-1](1+k)}{(2+k-\theta)(2+k)}}T^{\frac{1}{2+k-\theta}2^{-J}}.
 \end{align*}
Taking $J$ be the smallest integer greater than or equal to $\log\frac{1}{\xi}/\log2$. Then the upper bound is estimated by
$A_2 T^{\frac{[\al(2+k-\theta)-1](1+k)}{(2+k-\theta)(2+k)}+\xi}$.
The second term
\begin{align*}
\sum_{j=1}^{J-1}
 &a_T^{1+\Delta+\Delta^2+\cdots+\Delta^{j-1}}b_m^{\Delta^j}+b_m\leq A_2
 T^{\big(\frac{\al}{2}-\frac{1}{2(2+k-\theta)}\big)\frac{1-\Delta^j}{1-\Delta}}b_1^{\Delta^j}m^{\zeta\Delta^j}+b_1m^\zeta\\
 &\leq A_2b_1T^{\frac{[\al(2+k-\theta)-1](1+k)}{(2+k-\theta)(2+k)}}
 \sum_{j=0}^{J-1}T^{\big(\zeta-\frac{[\al(2+k-\theta)-1](1+k)}{(2+k-\theta)(2+k)}\big)\frac{k^j}{(2+2k)^j}}.
\end{align*}
where $b_1=\sqrt{q2^{q}}+2\sqrt{{\cal D}_0}
 +\sqrt{12q{\cal D}_0^{q/2}\log\frac{3}{\delta}}+\sqrt{A_1\log\frac{3}{\delta}}.$\\
 If $\zeta>\frac{[\al(2+k-\theta)-1](1+k)}{(2+k-\theta)(2+k)},$ it is bounded by $A_2b_1JT^\zeta.$ If $\zeta\leq\frac{[\al(2+k-\theta)-1](1+k)}{(2+k-\theta)(2+k)},$ it is bounded by $A_2b_1JT^{\frac{[\al(2+k-\theta)-1](1+k)}{(2+k-\theta)(2+k)}}.$\\
 Thus we have
 $$R^{(J)}\leq (A_2+A_2b_1J)T^\vartheta,$$
 where $\vartheta=\max\{\frac{[\al(2+k-\theta)-1](1+k)}{(2+k-\theta)(2+k)}+\xi,\zeta\}$.
 With confidence $1-J\delta,$ there holds
 \begin{equation*}
 \|f_{\bf z}^\epsilon\|_K\leq R^{(J)}\leq A_2(1+\sqrt{q2^{q}}+2\sqrt{{\cal D}_0}
 +\sqrt{12q{\cal D}_0^{q/2}}+\sqrt{A_1})\sqrt{\log\frac{3}{\delta}}JT^\vartheta.
 \end{equation*}
Noting $J\leq 2\log\frac{3}{\xi},$ then we can get (\ref{fzbound}) by replacing $\delta$ by $\delta/J$.
 \end{proof}
\end{lemma}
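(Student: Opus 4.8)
The plan is to bootstrap the crude bound $\|f_{\bf z}^\epsilon\|_K\le\la^{-1/2}$, obtained by testing $f=0$ in (\ref{svm}), into the sharper scale $T^{\vartheta}$ via the iteration device of Wu and Zhou. The engine is a self-improving inequality: I will combine the error decomposition of Lemma 2 with the two sample-error estimates (Corollaries 2 and 3) to show that, on a set of large measure, any ${\bf z}\in{\cal W}(R)$ forces a bound of the form $\|f_{\bf z}^\epsilon\|_K\le a_TR^{\Delta}+b_T$ with contraction exponent $\Delta=\frac{k}{2+2k}<1$. Feeding this back into itself drives the exponent of $R$ down geometrically toward the fixed-point scale $T^{\vartheta}$.

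First I would assemble the master inequality. For ${\bf z}\in{\cal W}(R)\cap Z_{1,\delta}\cap Z_{2,\delta}$ with $R>1$, Lemma 2 gives ${\cal E}(\pi(f_{\bf z}^\epsilon))-{\cal E}(f_q)+\la\|f_{\bf z}^\epsilon\|_K^2\le S_1+S_2+{\cal D}(\la)+q2^{q-1}\epsilon$. I substitute the estimate of Corollary 3 applied with $f=f_{\bf z}^\epsilon\in B_R$, whose contribution $\tfrac12({\cal E}(\pi(f_{\bf z}^\epsilon))-{\cal E}(f_q))+12\varepsilon^*(R,T,\delta/3)$ lets the leading half of the excess error be absorbed on the left, after which the remaining $\tfrac12({\cal E}(\pi(f_{\bf z}^\epsilon))-{\cal E}(f_q))\ge0$ is discarded. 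Inserting the $S_2$ bound of Corollary 2, the decay (\ref{app}) on ${\cal D}(\la)$, and the explicit form of $\varepsilon^*$ from (\ref{s1bound}), every $R$-free piece collects into a single $R$-independent term while only $A_2R^{k/(k+1)}T^{-1/(2+k-\theta)}$ carries the $R$-dependence. Dividing by $\la=T^{-\al}$ and taking square roots yields $\|f_{\bf z}^\epsilon\|_K\le a_TR^{\Delta}+b_T$ with $a_T=\sqrt{A_2}\,T^{\al/2-1/(2(2+k-\theta))}$, while $b_T$ absorbs the four $R$-free exponents and has order $T^{\zeta}$, $\zeta=\max\{\tfrac{\al-\eta}{2},\tfrac{\al(1-\be)}{2},\tfrac{\al}{2}+\tfrac{q(1-\be)\al}{4}-\tfrac12,\tfrac{\al}{2}-\tfrac{1}{2(2-\theta)}\}$. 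Writing $V_R={\cal W}(R)\setminus(Z_{1,\delta}\cap Z_{2,\delta})$, of measure at most $\delta$, this reads as the set inclusion ${\cal W}(R)\subseteq{\cal W}(a_TR^{\Delta}+b_T)\cup V_R$.

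Next I iterate. Starting from $R^{(0)}=\la^{-1/2}$, so that ${\cal W}(R^{(0)})=Z^T$, and setting $R^{(j)}=a_T(R^{(j-1)})^{\Delta}+b_T$, the chain of inclusions gives $Z^T\subseteq{\cal W}(R^{(J)})\cup\bigcup_{j=0}^{J-1}V_{R^{(j)}}$. Unwinding the recursion splits $R^{(J)}$ into an initial term $a_T^{(1-\Delta^J)/(1-\Delta)}(R^{(0)})^{\Delta^J}$ and a forcing sum $\sum_j a_T^{(1-\Delta^j)/(1-\Delta)}b_T^{\Delta^j}+b_T$. Since $\Delta\le\tfrac12$, the factor $(R^{(0)})^{\Delta^J}=T^{(\al/2)\Delta^J}$ is nearly trivial, and the $T$-exponent of the initial term converges to $\gamma/(1-\Delta)$ with $\gamma=\tfrac{\al}{2}-\tfrac{1}{2(2+k-\theta)}$; using $1-\Delta=\tfrac{2+k}{2+2k}$ this limit simplifies to exactly $\tfrac{[\al(2+k-\theta)-1](1+k)}{(2+k-\theta)(2+k)}$, with residual of order $\Delta^J\le\tfrac{1}{2+k-\theta}2^{-J}$. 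Choosing $J$ the least integer with $J\ge\log(1/\xi)/\log2$ forces $2^{-J}\le\xi$, so the residual is at most $\xi$ and the initial term is $\le A_2T^{\vartheta}$; a comparison of $\zeta$ against the dominant exponent controls the forcing sum by a constant multiple of $J\,T^{\vartheta}$ carrying the same $\sqrt{\log(3/\delta)}$ prefactor, where $\vartheta=\max\{\tfrac{[\al(2+k-\theta)-1](1+k)}{(2+k-\theta)(2+k)}+\xi,\zeta\}$.

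Finally I account for confidence. The union $\bigcup_{j=0}^{J-1}V_{R^{(j)}}$ has measure at most $J\delta$, so $\|f_{\bf z}^\epsilon\|_K\le R^{(J)}$ holds with confidence $1-J\delta$; replacing $\delta$ by $\delta/J$ restores confidence $1-\delta$, and bounding $J\le2\log(3/\xi)$ lets the explicit prefactor $J$ together with the rescaled logarithm combine into the $(\log\frac{3}{\xi})^2\sqrt{\log\frac{3}{\delta}}$ factor of (\ref{fzbound}). The step I expect to be the main obstacle is the exponent bookkeeping in the iteration: one must check that $\Delta=\frac{k}{2+2k}$ is genuinely below $1$, verify that $\gamma\cdot\frac{1-\Delta^J}{1-\Delta}+\frac{\al}{2}\Delta^J$ collapses to the closed-form dominant term of $\vartheta$, and confirm that the geometric residuals of both the initial and the forcing contributions are uniformly swallowed by the single $\xi$-slack once $J$ is fixed. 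Everything else is a mechanical substitution of the constants $A_1$, $A_2$, $C_{\theta}$ furnished by Corollaries 2 and 3.
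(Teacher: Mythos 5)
Your proposal is correct and follows essentially the same route as the paper's own proof: the identical master inequality assembled from Lemma 2 together with Corollaries 2 and 3, the same self-improving bound $\|f_{\bf z}^\epsilon\|_K\leq a_TR^{k/(2+2k)}+b_T$ with the same $a_T$, $b_T$, $\zeta$, and the same Wu--Zhou iteration starting from $R^{(0)}=\la^{-1/2}$ with $J\approx\log(1/\xi)/\log 2$ and the $1-J\delta$ confidence bookkeeping resolved by replacing $\delta$ with $\delta/J$. Your exponent computation $\gamma\cdot\frac{1-\Delta^J}{1-\Delta}+\frac{\al}{2}\Delta^J\to\frac{[\al(2+k-\theta)-1](1+k)}{(2+k-\theta)(2+k)}$ and the handling of the residual via $2^{-J}\leq\xi$ match the paper exactly, and your explicit identification of $V_R$ as ${\cal W}(R)\setminus(Z_{1,\delta}\cap Z_{2,\delta})$ is if anything cleaner than the paper's phrasing.
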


Now we can prove Theorem 2.\\
{\it Proof of Theorem 2.} By Lemma 3, there exists a subset $V_{R'}\subset Z^T$ with measure at most $\delta$ such
that $Z^T\setminus V_{R'}\subseteq {\cal W}(R).$ Let $R$ be the right side of (\ref{fzbound}). Applying Corollary 2 and
 Corollary 3 to $R$, then there exists another subset $V_R\subset Z^T$ with measure at most $\delta$ such that
 \begin{align*}
  {\cal E}(\pi(f))-{\cal E}(f_q)&\leq q2^{q}\epsilon
+4{\cal D}(\la) +12q\|f_\la\|_\infty^qT^{-1}\log\frac{3}{\delta}+A_1T^{-\frac{1}{2-\theta}}\log\frac{3}{\delta}\\
 &+A_3\big(\log\frac{3}{\xi}\big)^2\sqrt{\log\frac{3}{\delta}}T^{\frac{k}{1+k}\vartheta-\frac{1}{2+k-\theta}}.
 \end{align*}
where $A_3=A_2(4A_2)^{\frac{k}{k+1}}(1+\sqrt{q2^{q}}+2\sqrt{{\cal D}_0}
 +\sqrt{12q{\cal D}_0^{q/2}}+\sqrt{A_1})$.
By (\ref{compa}), we obtain that
\begin{align*}
\|\pi(f_{\bf z}^\epsilon)-f_q\|_{L^{r}_{\rho_X}}\leq C^*T^{-\Lambda}
\end{align*}
where
$$C^*=C_{r}\big(q2^{q}+4{\cal D}_0+12q{\cal D}_0^{q/2}+A_1+ A_3\big)^{\frac{1}{q+w}}$$
and $\Lambda$ is given by (\ref{parameter}).
The restriction (\ref{retrict}) ensures that $\Lambda>0.$ Replacing $\delta$ with $\delta/2,$ we
complete the proof of Theorem 2.\\
Now we are in the state of proving Theorem 1.\\
{\it Proof of Theorem 1.} We shall prove Theorem 1 by Theorem 2.
First, we check the noise condition (\ref{type}). Let the function $a(x)=\frac{1}{4}$ and $b(x)=2^{2\varphi+1},$
 $\forall x\in X$. For $s\in[0,a(x)]=[0,\frac{1}{4}],$
then
\begin{align*}
&\rho_x(\{y: f_{q}(x)\leq y\leq f_{q}(x)+s\})=\int_{f_q(x)}^{f_{q}(x)+s}\frac{d\rho_x(y)}{dy}dy
=2^{2\varphi+1}s^{\varphi+1}.
\end{align*}
By similarity, $\forall s\in[0,\frac{1}{4}],$
\begin{align*}
\rho_x(\{y: f_{q}(x)-s \leq y \leq f_{q}(x)\})=2^{2\varphi+1}s^{\varphi+1}.
\end{align*}
So we say that $\rho$ has a
$\infty$-average type $\varphi+1$.\\
Since $f_q\in {\cal H}_K$ and $K\in C^{\infty}(X\times X),$ then (\ref{app}) and (\ref{cover}) hold with $\be=1$ and $k=0.$
Thus, $\theta=\frac{2}{q+\varphi+1}$ and $r=q+\varphi+1$. Noting that the choice of $\la$ and $\epsilon$ satisfy (\ref{retrict})
and $\Lambda>0$.
This complements our Theorem 1.\\
{\it Proof of Corollary 1.} It is an easy consequence of Theorem 2.

\end{document}